\def\Z{{\mathbb{Z}}}
\def\Q{{\mathbb{Q}}}
\def\C{{\mathbb{C}}}
\def\P{{\mathbb{P}}}
\theoremstyle{plain}
\newtheorem{theo}{Theorem}[section]
\newtheorem{coro}[theo]{Corollary}
\newtheorem{lemma}[theo]{Lemma}
\newtheorem{prop}[theo]{Proposition}
\theoremstyle{definition}
\newtheorem{defi}[theo]{Definition}
\newtheorem{ex}[theo]{Example}
\newtheorem{rem}[theo]{Remark}
\def\ps@pprintTitle{%
  \let\@oddhead\@empty
  \let\@evenhead\@empty
  \let\@oddfoot\@empty
  \let\@evenfoot\@oddfoot
}
\title{Elliptic curves on abelian varieties}
\author{Robert Auffarth II}
\address{R. Auffarth II \\Departamento de Matem\'aticas, Facultad de
Ciencias, Universidad de Chile, Santiago\\Chile}
\email{rfauffar@mat.puc.cl }
\thanks{Partially supported by Fondecyt Grant 3150171}
\subjclass[2010]{14K02; 14K12; 32G20}%
\keywords{abelian variety, elliptic curve, Jacobian, polarization, decomposable}
\begin{document}

\maketitle

\begin{abstract}

Given a principally polarized abelian variety $(A,\Theta)$, we give a characterization of all elliptic curves that lie on $A$ in terms of intersection numbers of divisor classes in its N\'eron-Severi group.  
\end{abstract}

\section{Introduction}

The problem of finding elliptic curves on abelian varieties has a long history, originating with works of Abel and Jacobi on the decomposition of abelian integrals. A very general abelian variety contains no elliptic curve, but abelian varieties that do appear frequently in examples.

In their paper \cite{ES}, Ekedahl and Serre found examples of Jacobian varieties that split isogenously as the product of elliptic curves for certain curves up to genus 1297. A question that they pose is: For what numbers $g$ does there exist a (smooth projective) curve of genus $g$ whose Jacobian splits isogenously (or isomorphically) as the product of elliptic curves? Another interesting question is: For a fixed genus $g$, what is the maximum amount of elliptic curves that appear in any isogeny decomposition of a Jacobian variety of dimension $g$?

Most examples of abelian varieties that contain elliptic curves have been found using techniques such as group actions, but no comprehensive theory has been established in general. In dimension 2, Humbert \cite{Humbert} gave a description of all 2-dimensional principally polarized abelian varieties that are non-simple (that is, that contain a non-trivial abelian subvariety) in terms of their period matrices. The case of an irreducible principal polarization has recently been addressed in dimension 2 by Kani \cite{KaniJ} in a more algebraic setting.

In this paper, we address the issue of when an abelian variety defined over an algebraically closed field contains an elliptic curve, and we give a characterization of all elliptic curves on an abelian variety in terms of numerical divisor classes. This seems to be a first step in being able to answer the questions posed above. Kani \cite{Kani} described all abelian surfaces that contain an elliptic curve by means of intersection theory, and in this paper we generalize his methods to arbitrary dimension.

Throughout the paper, we let $A$ be an abelian variety of dimension $n$ defined over an algebraically closed field, and we fix once and for all an ample divisor $\Theta$ on $A$. An \emph{abelian divisor} will be an abelian subvariety of $A$ of codimension 1, seen as a prime (Weil) divisor on $A$.

One can prove that if $A$ contains an abelian subvariety $W$, then $W$ has an \emph{abelian complement}; that is, there exists an abelian subvariety $Y$ in $A$ such that the addition map $W\times Y\to A$ is an isogeny. There is a canonical way to find the abelian complement (\cite{BL2}), and so there is a bijection between abelian subvarieties of $A$ of dimension $r$ and abelian subvarieties of codimension $r$. We will always talk about the abelian complement of an abelian subvariety in the sense of Birkenhake and Lange \cite{BL2}.

We are interested in characterizing abelian varieties that contain an elliptic curve, and thus an abelian divisor, via intersection theory. Let $\frak{A}^*(A):=\bigoplus_{i=0}^n\frak{A}^i(A)$ be the \emph{Chow ring of $A$ modulo algebraic equivalence}, where $\frak{A}^i(A)$ denotes the group of algebraic cycles of codimension $i$ on $A$ modulo algebraic equivalence. We take algebraic equivalence so that we may translate a cycle by an element of $A$ and not affect its algebraic class. For $i=1$, $\frak{A}^1(A)$ is the \emph{N\'eron-Severi group} of $A$, and will be denoted by $\mbox{NS}(A)$. This is the group we will be concentrating on, and we will abuse notation throughout the paper by intersecting numerical (which in this case is the same as algebraic) classes of divisors and divisors interchangeably. If $D$ is a divisor, then $[D]$ will denote the algebraic equivalence class of $D$.

If $G$ is a free abelian group, then we will say that $g\in G$ is a \emph{primitive} element of $G$ if $G/\langle g\rangle$ is torsion-free.

Our first main result is the following:

\begin{theo}
Let $A$ be an abelian variety of dimension $n$, and let $\Theta$ be a fixed ample divisor on $A$. Then the map $Z\mapsto [Z]$ induces a bijective correspondence between abelian divisors on $A$ and primitive elements $\alpha\in \mbox{NS}(A)$ that satisfy $\alpha^2=0$ in $\frak{A}^*(A)$ and $(\alpha\cdot\Theta^{n-1})>0$. In particular, $A$ contains an elliptic curve if and only if there exists a non-zero class $\alpha\in \mbox{NS}(A)$ that satisfies $\alpha^2=0$.
\end{theo}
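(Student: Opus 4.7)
I will prove the bijection in three stages: (i) verify that $Z\mapsto[Z]$ lands in the claimed set of classes, (ii) establish injectivity, and (iii) construct, from any class $\alpha$ satisfying the hypotheses, the required abelian divisor via Mumford's descent of the associated line bundle.

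\textbf{Forward direction and injectivity.} Let $Z$ be an abelian divisor, and let $\pi\colon A\to E:=A/Z$ be the quotient to an elliptic curve, so that $Z=\ker(\pi)$ and $[Z]=\pi^*[0_E]$. Since $\dim E=1$, $[0_E]^2=0$ in $\frak{A}^*(E)$, hence $[Z]^2=0$; and $[Z]\cdot\Theta^{n-1}>0$ is immediate because $Z$ is an effective prime divisor and $\Theta$ is ample. For primitivity, note the standard identity $\phi_{[Z]}=\hat\pi\circ\phi_{[0_E]}\circ\pi$, where $\phi_{[0_E]}\colon E\to\hat E$ is the canonical principal polarization (an isomorphism) and $\hat\pi\colon\hat E\hookrightarrow\hat A$ is a closed immersion (being the dual of a surjection with connected kernel); in particular $\ker\phi_{[Z]}=Z$. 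If $[Z]=m\beta$ for some $m\geq 2$, then $m\phi_\beta=\phi_{[Z]}$ forces $\phi_\beta(Z)\subseteq\hat A[m]$, so by connectedness $Z\subseteq\ker\phi_\beta$ and $\phi_\beta=\psi\circ\pi$ for some $\psi\colon E\to\hat A$ with $[m]_{\hat A}\circ\psi=\hat\pi\circ\phi_{[0_E]}$. Matching images forces $\psi(E)=\hat\pi(E)$, hence $\psi=(\hat\pi\circ\phi_{[0_E]})\circ u$ for some $u\in\mbox{End}(E)$, giving $[m]_E\circ u=\mbox{id}_E$, which is impossible since $\deg[m]_E=m^2>1$. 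Injectivity of $Z\mapsto[Z]$ follows because any line bundle with class $[Z]$ is of the form $\pi^*M\otimes P$ for some $M$ on $E$ of degree one and some $P\in\mbox{Pic}^0(A)$; the existence of a section forces $P$ to descend along $\pi$, so the effective divisors in the class $[Z]$ are exactly the translates $\{Z+a : a\in A\}$, of which only $Z$ itself is a subgroup.

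\textbf{Reverse direction and consequences.} Given $\alpha$ primitive with $\alpha^2=0$ and $\alpha\cdot\Theta^{n-1}>0$, choose $L$ on $A$ with $c_1(L)=\alpha$. From $\alpha^2=0$ we obtain $\alpha^n=0$, so Riemann--Roch on the abelian variety gives $\chi(L)=\alpha^n/n!=0$, and hence $K(L):=\ker\phi_L$ has positive dimension. Set $K:=K(L)^0$ and $\pi\colon A\to B:=A/K$; by Mumford's descent theory (\cite{BL2}), $L$ descends up to $\mbox{Pic}^0$ to a non-degenerate line bundle $\bar L$ on $B$, so that $\alpha=\pi^*\bar\alpha$ for $\bar\alpha:=c_1(\bar L)$ and $\bar\alpha^{\dim B}=(\dim B)!\,\chi(\bar L)\neq 0$. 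If $\dim B\geq 2$, then $\alpha^{\dim B}=\pi^*(\bar\alpha^{\dim B})$ is a non-zero integer multiple of $[K]$ (hence non-zero numerically, as $[K]\cdot\Theta^{n-\dim B}>0$), contradicting $\alpha^{\dim B}=0$; if $\dim B=0$, then $\alpha=0$, contradicting positivity. Hence $B$ is an elliptic curve, $\bar\alpha=k[0_B]$ for some $k\in\Z$, and $Z:=K=\pi^{-1}(0)$ is an abelian divisor with $\alpha=k[Z]$. Primitivity gives $|k|=1$ and positivity gives $k=1$, so $\alpha=[Z]$. The "in particular" statement then follows: an elliptic curve in $A$ has a codimension-one abelian complement, producing a non-zero class with $\alpha^2=0$; conversely, any non-zero class with $\alpha^2=0$ has a non-zero primitive part to which the reverse argument applies (non-vanishing alone rules out $\dim B=0$), yielding an abelian divisor whose abelian complement is an elliptic curve in $A$.

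\textbf{Main obstacle.} I expect the delicate step to be the descent: identifying $B=A/K(L)^0$ and showing that $L$ descends to a non-degenerate line bundle there is exactly the content of Mumford's analysis of degenerate line bundles on abelian varieties. Once this is in hand, the dimension count pinning $\dim B=1$ is a clean interplay between Riemann--Roch on $B$ (forcing $\bar\alpha^{\dim B}\neq 0$) and $\alpha^2=0$ on $A$ (forcing $\alpha^{\dim B}=0$ whenever $\dim B\geq 2$); the remaining steps are elementary bookkeeping in intersection theory together with the rigidity of abelian subvarieties.
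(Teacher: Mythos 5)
Your proof is correct, but it reaches the theorem by a genuinely different route than the paper. For surjectivity, the paper first proves a Nakai--Moishezon-type effectivity criterion (a divisor class with $(D^i\cdot\Theta^{n-i})\geq 0$ for all $i$ is effective, proved by induction via descent to $A/K(D)_0$), concludes that a primitive class with $\alpha^2=0$ and positive degree is effective, and then shows by a purely intersection-theoretic argument that an effective divisor with self-intersection zero is a multiple of a single abelian divisor (two distinct abelian divisors meet in a codimension-two abelian subvariety, forcing a positive intersection number). You instead go directly through the structure theory of degenerate line bundles: $\chi(L)=0$ forces $K(L)^0$ to be positive-dimensional, $L$ descends up to $\mbox{Pic}^0$ to a non-degenerate bundle on $B=A/K(L)^0$, and the tension between $\bar\alpha^{\dim B}\neq 0$ and $\alpha^2=0$ pins $\dim B=1$. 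Likewise your primitivity argument (via $\ker\phi_{[Z]}=Z$ and a degree count on $[m]_E$) replaces the paper's Lemma~\ref{6}, which again leans on the effectivity criterion. Your route is more self-contained and structural for this one statement --- it produces the elliptic quotient directly --- whereas the paper's intermediate results (Proposition~\ref{4} on effectivity and Proposition~\ref{3} on effective classes with square zero) are reused repeatedly later (Lemma~\ref{9}, Theorem~\ref{16}, Proposition~\ref{17}), so the paper's detour pays for itself. Two small points: the descent statement you invoke is in Mumford's \emph{Abelian Varieties} (or Birkenhake--Lange's book), not in \cite{BL2}; and in the ``in particular'' step one should note, as you implicitly do, that $m^{\dim B}\beta^{\dim B}=0$ together with $\beta^{\dim B}$ being a non-torsion multiple of $[K]$ is what rules out $\dim B\geq 2$ for the primitive part $\beta$, since a priori $\alpha^2=0$ only gives $m^2\beta^2=0$ in $\frak{A}^*(A)$.
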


Let $\natural:\mbox{NS}(A)\to \mbox{NS}(A)$ denote the endomorphism
$$\alpha\mapsto\alpha^\natural=(\Theta^n)\alpha-(\alpha\cdot\Theta^{n-1})[\Theta];$$
it is easily seen to be a sort of projection away from $\Z[\Theta]$ with respect to the pairing $(\alpha,\beta)\mapsto(\alpha\cdot\beta\cdot\Theta^{n-2})$.
 We define the homogeneous polynomials
$$q_r(\alpha):=-\frac{1}{(r-1)(\Theta^n)}((\alpha^\natural)^r\cdot\Theta^{n-r})$$
for $2\leq r\leq n$. For $n=2$, we get precisely the quadratic form that Kani introduces in his paper \cite{Kani}. We can see these as forms on the \emph{polarized N\'eron-Severi group} $\mbox{NS}(A,\Theta):=\mbox{NS}(A)/\Z[\Theta]$.

Recall that $(A,\Theta)$ is a \emph{principally polarized abelian variety} (ppav) if the isogeny $A\to A^\vee$ (where $A^\vee=\mbox{Pic}^0(A)$ is the dual abelian variety of $A$) induced by $\Theta$ is an isomorphism (or equivalently, $h^0(A,\mathcal{O}_A(\Theta))=1$).

Our second result states:

\begin{theo}
Let $(A,\Theta)$ be a ppav. Then there is a bijective correspondence between abelian divisors $Z\subseteq A$ with $(Z\cdot\Theta^{n-1})=d$ and primitive numerical classes $\alpha\in \mbox{NS}(A,\Theta)$ that satisfy $q_r(\alpha)=(-1)^rd^r$ for $r=2,\ldots,n$, given by $Z\mapsto[Z]$.
\end{theo}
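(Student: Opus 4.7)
Let $Z$ be an abelian divisor with $(Z\cdot\Theta^{n-1})=d$ and set $\alpha=[Z]\in\mbox{NS}(A,\Theta)$.  By Theorem 1.1, $[Z]^2=0$ in $\frak{A}^*(A)$, so in the binomial expansion of $(\alpha^\natural)^r=((\Theta^n)[Z]-d[\Theta])^r$ only the $k=0$ and $k=1$ summands survive.  A short computation then yields
$$(\alpha^\natural)^r\cdot\Theta^{n-r}=(\Theta^n)\,d^r(-1)^{r-1}(r-1),$$
so $q_r([Z])=(-1)^r d^r$.  For injectivity, if $[Z]-[Z']=k[\Theta]$ with both divisors of degree $d$, intersecting with $\Theta^{n-1}$ forces $k(\Theta^n)=0$; hence $k=0$ and $Z=Z'$ by Theorem 1.1.

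\textbf{Surjectivity: reducing to a lift.}
Given a primitive $\alpha$ with $q_r(\alpha)=(-1)^r d^r$ for all $2\le r\le n$, the plan is to construct a lift $\tilde\alpha\in\mbox{NS}(A)$ satisfying $\tilde\alpha^2=0$ in $\frak{A}^2(A)$ and $(\tilde\alpha\cdot\Theta^{n-1})=d$; Theorem 1.1 then furnishes the desired abelian divisor.  Writing $a_k:=(\tilde\alpha^k\cdot\Theta^{n-k})$ for an arbitrary lift and expanding $(\alpha^\natural)^r\cdot\Theta^{n-r}$ by the binomial theorem, the $q_r$ assumptions translate into a lower-triangular linear system in $a_2,\dots,a_n$ whose $r$-th pivot is a nonzero multiple of $(\Theta^n)^{r-1}$.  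Solving inductively in $r$ expresses each $a_k$ ($k\ge 2$) as an explicit polynomial in $a_1$, $d$, and $\Theta^n$ that vanishes precisely when $a_1=d$.

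\textbf{Main obstacle.}
The core difficulty is upgrading the vanishing of the numerical invariants $a_k$ ($k\ge 2$) to the Chow-ring identity $\tilde\alpha^2=0$.  The ppav hypothesis enters crucially via the $\Q$-linear isomorphism $\mbox{NS}_\Q(A)\cong\mbox{End}^{\mathrm{sym}}_\Q(A)$, $\tilde\alpha\mapsto\psi_{\tilde\alpha}:=\phi_\Theta^{-1}\circ\phi_{\tilde\alpha}$, which sends $[\Theta]$ to $\mathrm{id}_A$; a standard Riemann--Roch computation gives $a_k=k!(n-k)!\,e_k(\lambda_1,\dots,\lambda_n)$, where $\lambda_i$ are the eigenvalues of $\psi_{\tilde\alpha}$ on the tangent space at the origin.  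Hence $a_k=0$ for all $k\ge 2$ forces the characteristic polynomial of $\psi_{\tilde\alpha}$ to have shape $t^{n-1}(t-\lambda)$, so $\psi_{\tilde\alpha}$ has rank at most one.  The connected component of $\ker\psi_{\tilde\alpha}$ is then an abelian divisor $Z$, whose class is proportional to $\tilde\alpha$ modulo $[\Theta]$; primitivity of $\alpha$ identifies $[Z]$ with $\alpha$ in $\mbox{NS}(A,\Theta)$, and $(Z\cdot\Theta^{n-1})=d$ follows from $a_1=d$.  A final integrality check---that a $\Z$-lift with $a_1=d$ actually exists in $\mbox{NS}(A)$ rather than only $\mbox{NS}_\Q(A)$---is handled inductively from the same triangular system together with the integrality of the $q_r(\alpha)$.
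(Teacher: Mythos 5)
Your forward direction has a genuine gap: you verify $q_r([Z])=(-1)^rd^r$ (correctly) and prove injectivity, but you never show that $[Z]$ is \emph{primitive} in $\mbox{NS}(A,\Theta)$, which is required for the map $Z\mapsto[Z]$ to land in the stated target set. This is not a formality: primitivity in $\mbox{NS}(A)$ (the paper's Lemma~\ref{6}) does not imply primitivity modulo $\Z[\Theta]$, and the statement is actually \emph{false} for non-principal polarizations (the paper exhibits $A=E_1\times E_2$, $\Theta=\{0\}\times E_2+2(E_1\times\{0\})$, $Z=\{0\}\times E_2=-2(E_1\times\{0\})+\Theta$). The proof that it holds for principal polarizations is the paper's Lemma~\ref{14}, an arithmetic argument resting on the fact that $m^{n-1}\mid n!$ forces $m=\pm1,\pm2$ (Lemma~\ref{13}); nothing in your write-up substitutes for it.

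The surjectivity argument has a second gap of the same arithmetic nature. Everything hinges on producing an \emph{integral} lift $\tilde\alpha\in\mbox{NS}(A)$ of $\alpha$ with $(\tilde\alpha\cdot\Theta^{n-1})=d$; since lifts differ by multiples of $[\Theta]$ and $\deg[\Theta]=n!$, this is exactly the congruence $\deg\alpha\equiv d\ (\mathrm{mod}\ n!)$, i.e.\ the paper's Lemma~\ref{15}. Your triangular system gives $n!^{r-1}(\tilde\alpha^r\cdot\Theta^{n-r})=(\deg\tilde\alpha-d)^{r-1}(\deg\tilde\alpha+(r-1)d)$, so integrality of the intersection numbers only tells you that $n!^{r-1}$ divides this product for each $r$; extracting $n!\mid\deg\tilde\alpha-d$ from that is a genuine prime-by-prime argument (again via Legendre's formula and Lemma~\ref{13}), not something ``handled inductively from the same triangular system.'' This congruence is the technical heart of the theorem and cannot be asserted. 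Finally, your route from $(\tilde\alpha^k\cdot\Theta^{n-k})=0$ for $k\geq2$ to an abelian divisor via eigenvalues of $\psi_{\tilde\alpha}=\phi_\Theta^{-1}\phi_{\tilde\alpha}$ is a legitimately different strategy from the paper's (which uses effectivity via the Nakai--Moishezon criterion, Proposition~\ref{4}, and the geometric characterization of abelian divisors, Propositions~\ref{1} and~\ref{3}), but as written it too is incomplete: the step from $\mathrm{rank}\,\psi_{\tilde\alpha}\leq1$ to ``$[Z]$ is proportional to $\tilde\alpha$'' and to the Chow-ring identity $\tilde\alpha^2=0$ needs an actual argument, and the tangent-space eigenvalue computation is delicate over fields of positive characteristic, which the paper's setting includes.
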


This theorem can be used to produce Humbert-style equations for the moduli space of complex ppavs that contain an abelian divisor (and thus elliptic curves) of a certain degree. \\

\noindent\emph{Acknowledgements:} The work presented here is part of my Ph.D. thesis, and I am very grateful to my advisor Rub\'{\i} Rodr\'{\i}guez for her support and mathematical insight. I would like to thank Igor Dolgachev for the invitation to study at the University of Michigan for a semester, for very helpful discussions and for asking hard questions that contributed to this paper. I am also grateful to Samuel Grushevsky and Giancarlo Urz\'{u}a for their mathematical advice and for giving me interesting ideas on where to direct the techniques developed here.

\section{Abelian divisors}

Our first goal is to be able to characterize abelian divisors by their numerical classes. We first characterize them among all prime divisors.

\begin{prop}\label{1}
Let $Z$ be a prime divisor on an abelian variety $A$ of dimension $n$. Then $Z$ is the translation of an abelian divisor if and only if $[Z]^2=0$ in $\frak{A}^*(A)$.
\end{prop}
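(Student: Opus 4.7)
The plan is to prove each implication separately. The ``only if'' direction is an immediate consequence of translation invariance of algebraic equivalence, while the converse requires analyzing $L := \mathcal{O}_A(Z)$ through its theta group.

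For the easy direction, if $Z$ is a translate of an abelian divisor $W$, then $[Z]=[W]$, and for any $b \notin W$ the translate $W+b$ is disjoint from $W$, so $[W]\cdot[W+b]=0$; since $[W+b]=[W]$, this gives $[Z]^2 = [W]^2 = 0$.

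For the converse, set $L := \mathcal{O}_A(Z)$, consider $\phi_L : A \to A^\vee$ given by $a \mapsto t_a^* L \otimes L^{-1}$, let $B$ denote the reduced identity component of $K(L) := \ker \phi_L$, and let $\pi: A \to A/B$ be the quotient. Put $m := n - \dim B = \dim(A/B)$. By Mumford's descent theorem, $L$ is algebraically equivalent to $\pi^*M$ for some line bundle $M$ on $A/B$ with $K(M)$ finite. Since $Z$ is a non-trivial effective divisor, $L \notin \mathrm{Pic}^0(A)$, so $L$ has index $0$ by Mumford's vanishing theorem, and the descended $M$ is therefore ample on the $m$-dimensional abelian variety $A/B$. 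Using the projection formula together with the fact that $\pi_*(\Theta^{n-k})$ is a non-zero effective class on $A/B$ for every $k \leq m$, I would deduce
\[
(L^k \cdot \Theta^{n-k}) = (M^k \cdot \pi_*\Theta^{n-k}) > 0 \qquad \text{for all } 0 < k \leq m.
\]
Taking $k=2$, the hypothesis $[Z]^2 = 0$ forces $m < 2$; combined with $m \geq 1$ (since $L \notin \mathrm{Pic}^0$), we conclude $m=1$, so $E := A/B$ is an elliptic curve and $B$ is an abelian subvariety of codimension $1$.

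To finish, I would show that $Z$ itself is a fiber of $\pi$. If instead $\pi(Z) = E$, then for a general $p \in E$ the fiber $F := \pi^{-1}(p)$ meets $Z$ properly in a non-zero effective divisor on $F$ of algebraic class $[L|_F] = [(\pi^*M)|_F] = 0$. But no non-zero effective divisor on an abelian variety is algebraically trivial, since non-trivial elements of $\mathrm{Pic}^0$ have no global sections; this contradiction shows $\pi(Z)$ is a single point $p$, and by irreducibility $Z = \pi^{-1}(p)$, which is a translate of $B$. The main obstacle is setting up the positivity chain $(L^k \cdot \Theta^{n-k}) > 0$ for $k \leq m$, which rests on Mumford's descent and the ampleness of $M$; once this is in hand, both the reduction to $m=1$ and the concluding fiber argument are clean geometric consequences.
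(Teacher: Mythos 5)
Your proof is correct, but it takes a genuinely different route from the paper's. The paper argues directly with the geometry of $Z$: after translating so that $0\in Z$, for any $x\in Z$ the intersection $Z\cap(Z-x)$ contains $0$, so if it were a proper subvariety of $Z$ it would have pure codimension $2$ in $A$ and would contribute positively to $(Z^2\cdot\Theta^{n-2})$; hence $Z-x=Z$ for every $x\in Z$, which makes $Z$ a subgroup and therefore an abelian divisor. That is a five-line argument using nothing beyond positivity of proper intersections against an ample class. Your argument instead runs through the theory of the line bundle $L=\mathcal{O}_A(Z)$: the subvariety $B=K(L)_0$, descent of $L$ to $A/B$, Mumford's vanishing/index theorem to get ampleness of the descended bundle $M$ (here you do need $H^0(L)\neq 0$ to rule out the case where $L|_B$ is nontrivial and all cohomology vanishes --- your appeal to the vanishing theorem is doing exactly that job), and the positivity chain $((\pi^*M)^k\cdot\Theta^{n-k})>0$ for $k\le\dim A/B$ to force $\dim A/B=1$. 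This is all correct, and the concluding fiber argument is clean, but it is considerably heavier machinery for the same statement; what it buys you is a structural byproduct --- the numerical dimension of $\mathcal{O}_A(Z)$ equals $1$, and $Z$ is a fibre of the quotient by $K(L)_0$ --- which essentially recovers the abelian complement along the way. One small point: the inequality $(M^k\cdot\pi_*\Theta^{n-k})>0$ is most cleanly justified by representing $M^k$ by a nonzero effective cycle (general members of $|3M|$, which is nonempty in codimension $k$ since $k\le\dim A/B$ and $M$ is ample) and pulling it back under $\pi$, then intersecting with $\Theta^{n-k}$; pushing $\Theta^{n-k}$ forward as you state it requires separately checking that $\pi_*\Theta^{n-k}\neq 0$, which is true but is itself the content of the claim.
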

\begin{proof}
 If $Z$ is the translation of an abelian divisor and $z\in Z$, let $x\notin Z-z:=t_{-z}(Z)$, where $t_a:A\to A$ for $a\in A$ is translation by $a$. Then $Z\cap (Z+x)=\varnothing$, and so in particular $[Z]^2=0$.
Conversely, assume that $Z$ is a prime divisor and $(Z^2\cdot \Theta^{n-2})=0$. By translating, we can assume that $Z$ contains $0$. We see that if $x\in Z$, then $0\in Z\cap(Z-x)$, and therefore, if $Z\cap(Z-x)\neq Z$, this would be a subvariety of $A$ of codimension 2 and hence $(Z^2\cdot\Theta^{n-2})>0$, a contradiction. This implies that $Z=Z-x$. Similarly, we see that for $x,y\in Z$, $Z-(x+y)=(Z-x)-y=Z-y=Z$, and we therefore conclude that $Z$ is a group. Since we can see $Z$ as an irreducible subvariety of $A$, we obtain that it is an abelian subvariety of codimension 1.
\end{proof}

We see that abelian divisors correspond to certain elements $\alpha\in\mbox{NS}(A)$ such that $\alpha^2=0$. The question we would like to answer is: How can we characterize abelian divisors among all such elements?

\begin{prop}\label{3}
If $Z$ is an effective divisor, then $Z\equiv mY$ for some abelian divisor if and only if $[Z]^2=0$ in $\frak{A}^*(A)$ (and this occurs if and only if $(Z^2\cdot\Theta^{n-2})=0$).
\end{prop}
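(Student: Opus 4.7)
The forward direction is immediate: if $Z \equiv mY$ for an abelian divisor $Y$, then $[Z]^2 = m^2[Y]^2 = 0$ by Proposition~\ref{1}, and pairing with $\Theta^{n-2}$ gives $(Z^2 \cdot \Theta^{n-2}) = 0$. For the reverse, I plan to assume only the weakest hypothesis $(Z^2 \cdot \Theta^{n-2}) = 0$ and deduce $Z \equiv mY$; this establishes all three equivalences in the statement at once.

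To carry this out, I would decompose $Z = \sum_{i=1}^k n_i Z_i$ as a positive sum of distinct prime divisors and expand
\[ 0 = (Z^2 \cdot \Theta^{n-2}) = \sum_{i,j} n_i n_j (Z_i \cdot Z_j \cdot \Theta^{n-2}). \]
Each summand is non-negative: for $i \neq j$, the primes $Z_i, Z_j$ are distinct and hence meet properly in codimension at least $2$, so $Z_i \cdot Z_j$ is effective and its pairing with the ample class $\Theta^{n-2}$ is $\geq 0$; for $i = j$, the self-intersection $[Z_i]^2$ is represented by the proper intersection $Z_i \cdot (Z_i + a)$ for generic $a \in A$, again an effective cycle. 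Hence every term must vanish; in particular $(Z_i^2 \cdot \Theta^{n-2}) = 0$ for each $i$, and Proposition~\ref{1} then forces each $Z_i$ to be a translate of some abelian divisor $Y_i$, so $[Z_i] = [Y_i]$.

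The main obstacle is then to show that all the $Y_i$ coincide. Here I would argue that two distinct codimension-one subgroups of $A$ must satisfy $Y_i + Y_j = A$, since the sum is a subgroup of $A$ strictly containing $Y_i$. The standard dimension formula for subgroups of abelian varieties then gives $\dim(Y_i \cap Y_j) = n - 2$, so $Y_i$ and $Y_j$, and equally their translates $Z_i$ and $Z_j$, meet properly in a non-empty codimension-two cycle. Pairing this non-zero effective cycle with $\Theta^{n-2}$ would yield a strictly positive intersection number by ampleness of $\Theta$, contradicting $(Z_i \cdot Z_j \cdot \Theta^{n-2}) = 0$. Therefore all $Y_i$ coincide with a single abelian divisor $Y$, and $[Z] = m[Y]$ with $m = \sum_i n_i$, giving $Z \equiv mY$ as required.
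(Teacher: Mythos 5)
Your proof is correct and follows essentially the same route as the paper's: decompose $Z$ into its prime components, use the non-negativity of each term in the expansion of $(Z^2\cdot\Theta^{n-2})$ to conclude each component satisfies the hypothesis of Proposition~\ref{1}, and then rule out two genuinely distinct abelian divisors appearing by observing that their translates meet in a non-empty codimension-two cycle with positive degree against $\Theta^{n-2}$. Your write-up is slightly more careful than the paper's on two points the paper leaves implicit (why the cross terms are non-negative, and why the intersection of the translates $Z_i\cap Z_j$ is non-empty rather than just $Y_i\cap Y_j$), but the underlying argument is the same.
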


\begin{proof} Assume that $[Z]^2=0$ and $Z\neq 0$. Write $Z=\sum m_iF_i$, where the $F_i$ are irreducible codimension 1 subvarieties of $A$ and $m_i>0$. We have that
$$0=(Z^2\cdot \Theta^{n-2})=\sum_{i,j}m_im_j(F_i\cdot F_j\cdot \Theta^{n-2})\geq0.$$
In particular, $(F_i^2\cdot \Theta^{n-2})=0$, and so by Proposition~\ref{1} we have that each $F_i$ is the translate of an abelian divisor. Since we can move all the $F_i$ inside their numerical equivalence classes, assume that all are abelian subvarieties. If $F_i$ and $F_j$ are different, for instance, we have that $(F_i\cap F_j)_0$ is an abelian subvariety of codimension 2. However, this contradicts the fact that $(F_i\cdot F_j\cdot\Theta^{n-2})=0$. Therefore we must have that all the $F_i$ are the same, and the result follows.
\end{proof}

\begin{defi}
We will say that a class $\alpha\in \mbox{NS}(A)$ is \emph{primitive} if every time we have $\alpha=m\beta$ for some $\beta\in \mbox{NS}(A)$, then $m=\pm1$. This is equivalent to $\mbox{NS}(A)/\Z\alpha$ being torsion free. We will say that a class $\alpha\in \mbox{NS}(A)$ is \emph{effective} if there exists an effective divisor $D$ on $A$ such that $[D]=\alpha$.
\end{defi}

The following two results are fundamental to all the analysis that follows. The results are well-known (see Birkenhake-Lange \cite{BL} and Bauer \cite{Bauer} for simple proofs over $\C$), but for lack of an adequate reference over a general algebraically closed field, we include the proofs here.

\begin{theo}[Nakai-Moishezon Criterion]
 Let $A$ be an abelian variety of dimension $n$ and let $\Theta$ be an ample divisor. Then a divisor $D$ on $A$ is ample if and only if $(D^i\cdot\Theta^{n-i})>0$ for all $i\leq n$.
\end{theo}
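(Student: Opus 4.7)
My plan is to handle both directions separately. The forward direction is elementary: if $D$ is ample, choose $k$ so that $kD$ and $k\Theta$ are both very ample, and observe that $(kD)^i \cdot (k\Theta)^{n-i}$ is the degree of a $0$-cycle obtained by intersecting $i$ general members of $|kD|$ with $n-i$ general members of $|k\Theta|$, hence strictly positive by Bertini; dividing by $k^n$ yields $(D^i \cdot \Theta^{n-i}) > 0$.

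For the converse, assume $(D^i \cdot \Theta^{n-i}) > 0$ for every $1 \le i \le n$. The idea is to deform $D$ to $\Theta$ through non-degenerate classes and transport ampleness via Mumford's Index Theorem. Consider the segment $L_t := (1-t)\Theta + tD$ in $\mbox{NS}(A)_{\R}$, $t \in [0,1]$. Binomial expansion gives
$$(L_t^n) = \sum_{i=0}^n \binom{n}{i}(1-t)^{n-i}t^i(D^i \cdot \Theta^{n-i}) > 0$$
throughout $[0,1]$, because every summand is non-negative and at least one is strictly positive. Equivalently, for every rational $t \in [0,1]$, a suitable integer multiple of $L_t$ is an integral line bundle $L$ with $\chi(L) = (L^n)/n! \ne 0$, hence non-degenerate. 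By Mumford's Index Theorem, every non-degenerate line bundle $L$ on $A$ has a well-defined index $i(L) \in \{0, \ldots, n\}$ such that $h^j(L) = 0$ for $j \ne i(L)$ and $h^{i(L)}(L) = |\chi(L)|$; as a standard consequence, $L$ is ample iff it is non-degenerate with $i(L) = 0$. Since the index is locally constant on the non-degenerate locus of $\mbox{NS}(A)_{\R}$, the continuous path $L_t$ yields $i(D) = i(\Theta) = 0$, and therefore $D$ is ample.

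The main obstacle is the local constancy of Mumford's index over a general algebraically closed field. Over $\C$, this is immediate from continuity of the eigenvalues of the associated Hermitian form on the universal cover; in arbitrary characteristic one must either specialize to $\C$ or invoke Mumford's intrinsic algebraic definition of the index via the characteristic polynomial of $\phi_\Theta^{-1} \circ \phi_L \in \mbox{End}^0(A)$, whose signature changes only at degenerate classes. Once this invariance is in place, the rest of the argument is a routine combination of the binomial identity above with Riemann--Roch on $A$.
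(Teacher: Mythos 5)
Your proof is correct in substance and lands on the same key input as the paper, but it packages the converse differently, and it is worth seeing that the two packagings are literally the same fact. The paper argues: by Riemann--Roch, $P(m):=\chi(m\Theta+D)=((m\Theta+D)^n)/n!$ has all coefficients positive under the hypothesis, hence no roots in $[0,\infty)$; by Mumford's index theorem (p.~145) the index $i(D)$ is the number of positive roots of $P$, so $i(D)=0$, whence $H^0(A,D)\neq0$, $D$ is effective, and an effective non-degenerate divisor is ample. Your segment $L_t=(1-t)\Theta+tD$ is, for $t\in(0,1]$, a positive rescaling of the ray $D+s\Theta$ with $s=(1-t)/t\in[0,\infty)$, so your observation that $(L_t^n)>0$ along the segment is exactly the statement that $P$ has no roots in $[0,\infty)$, and your appeal to constancy of the index along the segment is exactly Mumford's root-counting characterization of $i(D)$. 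Consequently the ``main obstacle'' you flag --- local constancy of the index over an arbitrary algebraically closed field --- is not something you need for general paths in the non-degenerate locus (and as a blanket statement about $\mbox{NS}(A)\otimes\R$ it is more than you have justified in positive characteristic); for the one path you use it is verbatim the cited theorem, and once you invoke it in that form the deformation language can be dropped, recovering the paper's two-line argument. What your framing buys is geometric transparency over $\C$, where continuity of the eigenvalues of the Hermitian form makes the index visibly locally constant; what the paper's framing buys is a characteristic-free proof with no continuity argument at all. Your Bertini treatment of the forward direction and your final step (non-degenerate of index $0$ implies effective implies ample) coincide with the paper's.
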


\begin{proof}
The only interesting part is to prove that if $(D^i\cdot\Theta^{n-i})>0$ for all $i$, then $D$ is ample. We see that if $D$ satisfies this, then in particular $(D^n)>0$. By Mumford \cite{Mum}, p. 145, we have that $H^{p}(A,D)=0$ for all $p\neq i(D)$ and $H^{i(D)}(A,D)\neq0$, where $i(D)$ is the number of positive roots of the polynomial $P(t)$ defined by $P(m):=\chi(m\Theta+D)$. By Riemann-Roch,
$$P(m)=\frac{((m\Theta+D)^n)}{n!},$$
and by our assumption on $D$, all the coefficients of this polynomial are positive. Therefore $i(D)=0$, and so $H^0(A,D)\neq0$. This means that $D$ is linearly equivalent to an effective divisor, and by Application 1 on page 57 of Mumford \cite{Mum}, we get that $D$ is ample.
\end{proof}

\begin{prop}\label{4}
If $A$ is an abelian variety of dimension $n$ and $D$ is a divisor on $A$, then $D$ is numerically equivalent to an effective divisor if and only if $(D^i\cdot\Theta^{n-i})\geq0$ for $1\leq i\leq n$.
\end{prop}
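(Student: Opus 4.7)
\emph{Forward direction.} Suppose $D\equiv E$ with $E=\sum_j m_jF_j$ effective, $m_j>0$ and $F_j$ prime. Since on an abelian variety algebraic and numerical equivalence agree on $\mathrm{NS}(A)$ and translation $t_a$ preserves both, I would expand $(E^i\cdot\Theta^{n-i})$ as a multi-index sum $\sum m_{j_1}\cdots m_{j_i}(F_{j_1}\cdots F_{j_i}\cdot\Theta^{n-i})$ and, in each term, replace the occurrences of the $F_{j_k}$ by generic translates $t_{a_k}(F_{j_k})$. By a moving-lemma argument on $A$, for generic $a_k$ the resulting divisors meet properly, so $F_{j_1}\cdots F_{j_i}$ is represented by an effective cycle of codimension $i$; intersecting that effective cycle with the ample class $\Theta^{n-i}$ yields a nonnegative number. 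Summing gives $(D^i\cdot\Theta^{n-i})\ge0$.

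\emph{Reverse direction, non-degenerate case.} The strategy mirrors the Nakai--Moishezon proof given above. Form $P(m):=\chi(m\Theta+D)=((m\Theta+D)^n)/n!$. Expanding, the coefficients of $P$ are positive multiples of the numbers $(D^i\cdot\Theta^{n-i})$, all nonnegative by hypothesis, with leading coefficient $(\Theta^n)/n!>0$. Hence $P(m)>0$ for every $m>0$ and $P$ has no strictly positive real root, so Mumford's theorem on p.~145 of \cite{Mum} forces the index $i(D)=0$ and $H^p(A,D)=0$ for $p\neq 0$. If moreover $(D^n)>0$, then $H^0(A,D)=\chi(D)=(D^n)/n!>0$, so $D$ is linearly equivalent to an effective divisor and we are done.

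\emph{Degenerate case $(D^n)=0$ --- the main obstacle.} Here Mumford's vanishing alone only yields $\chi(D)=0$. The plan is to descend: the polarization homomorphism $\phi_D:A\to A^\vee$ has positive-dimensional kernel, whose identity component $K$ defines a quotient $\pi:A\to B:=A/K$ with $\dim B<n$ and a non-degenerate line bundle $\bar D$ on $B$ satisfying $D\equiv\pi^*\bar D$. Choosing an ample class $\Theta_B$ on $B$ and using the projection formula, one transfers the inequalities $(D^i\cdot\Theta^{n-i})\ge0$ into inequalities $(\bar D^i\cdot\Theta_B^{\dim B-i})\ge0$ on $B$, with $(\bar D^{\dim B})>0$ coming from non-degeneracy together with those inequalities. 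Induction on $n$ then applies the non-degenerate case to $\bar D$, producing an effective divisor $E_B$ on $B$ with $E_B\equiv\bar D$; pulling back gives $\pi^*E_B\equiv D$, effective. The real work is in this descent: verifying both that the numerical relation $D\equiv\pi^*\bar D$ holds (so that intersection data passes between $A$ and $B$) and that the descended inequalities on $B$ are strong enough---particularly the strict positivity of the top self-intersection $(\bar D^{\dim B})$---to legitimately invoke the induction hypothesis.
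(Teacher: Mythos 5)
Your outline follows the same architecture as the paper's proof: nonnegativity is clear for effective classes; the converse is handled by Mumford's index theorem when $(D^n)>0$ and by descent to $B:=A/K(D)_0$ plus induction on dimension when $(D^n)=0$. The one step whose proposed mechanism does not work as written is the transfer of the inequalities to the quotient. The projection formula applied to $D\equiv\pi^*\bar D$ gives $(D^i\cdot\Theta^{n-i})=(\bar D^i\cdot\pi_*(\Theta^{n-i}))$, and $\pi_*(\Theta^{n-i})$ is merely some effective $i$-cycle on $B$; it is not a multiple of $\Theta_B^{\dim B-i}$ for your independently chosen ample class $\Theta_B$, so nonnegativity against it does not yield the inequalities $(\bar D^i\cdot\Theta_B^{\dim B-i})\ge0$ that the induction hypothesis requires. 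The paper closes this gap by a different intermediate step: before splitting into cases it applies the Nakai--Moishezon criterion to $D+m\Theta$ (ample for all $m>0$) to conclude that $D$ is nef. Nefness descends along the proper surjection $\pi$ (this is where the projection formula genuinely enters, testing against curves), so $\bar D$ is nef on $B$, and a nef divisor satisfies $(\bar D^i\cdot H^{\dim B-i})\ge0$ for every ample $H$; that supplies exactly the hypotheses needed to invoke the induction.

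Two smaller points. First, the paper's induction hypothesis is the full proposition in dimension $\dim B$, so it covers the case $(\bar D^{\dim B})=0$ as well; your extra effort to prove $(\bar D^{\dim B})>0$ is correct (indeed $K(\bar D)=K(D)/K(D)_0$ is finite, so $\bar D$ is non-degenerate and $\chi(\bar D)\neq0$) but unnecessary. Second, in your non-degenerate case you invoke the index theorem before assuming $(D^n)>0$; the theorem applies only to non-degenerate bundles, so the hypothesis $(D^n)>0$ should come first rather than as an afterthought.
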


\begin{proof}
If $D$ is numerically equivalent to an effective divisor, then clearly $(D^i\cdot\Theta^{n-i})\geq0$ for $1\leq i\leq n$.
For the other direction, assume that this inequality holds for $1\leq i\leq n$. Using the Nakai-Moishezon Criterion with $D+m\Theta$ and $\Theta$, we first observe that $D+m\Theta$ is ample for all $m\geq0$, and so $D$ is nef.

We will now proceed by induction on $n$ to prove the proposition. We see that for $n=1$ the proof is trivial. We then assume that $n>1$. If $(D^n)>0$, then for the same reasons as in the previous proof we have that $H^0(A,D)\neq0$, and so $D$ is linearly equivalent to an effective divisor.

If $(D^n)=0$, then $K(D):=\{x\in A:t_x^*D\sim D\}$ is not finite and there exists a divisor $D'$ on $A/K(D)_0$ such that $D-\pi^*D'$ is numerically trivial, where $\pi:A\to A/K(D)_0$ is the natural projection and $K(D)_0$ denotes the connected component of $0$ in $K(D)$. Since $\pi$ is proper and surjective and $\pi^*D'$ is nef, we also have that $D'$ is nef (this can be shown using the projection formula). By our induction hypothesis, we have that $D'$ is numerically equivalent to an effective divisor, and therefore $\pi^*D'\equiv D$ is numerically equivalent to an effective divisor.
\end{proof}

This proposition actually shows that if $\alpha^2=0$ in $\frak{A}^*(A)$, then either $\alpha$ or $-\alpha$ comes from an effective divisor.

\begin{coro}\label{5}
If $\alpha\in \mbox{NS}(A)$, then $\alpha=m[Z]$ for some abelian divisor $Z$ and some $m\in\Z$ if and only if $\alpha^2=0$ in $\frak{A}^*(A)$.
\end{coro}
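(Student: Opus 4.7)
My plan is to bootstrap directly off Propositions~\ref{3} and \ref{4}, using the observation the author flagged right after Proposition~\ref{4}. The easy direction is immediate: if $\alpha=m[Z]$ with $Z$ an abelian divisor, then by Proposition~\ref{1} we have $[Z]^2=0$, hence $\alpha^2=m^2[Z]^2=0$ in $\frak{A}^*(A)$.

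For the converse, the key point is that $\alpha^2=0$ as a class in the Chow ring (not merely the intersection number $(\alpha^2\cdot\Theta^{n-2})$) forces $\alpha^i=0$ for every $i\geq 2$, and in particular $(\alpha^i\cdot\Theta^{n-i})=0$ for $2\leq i\leq n$. Thus all but one of the intersection numbers appearing in the Nakai-type criterion of Proposition~\ref{4} are already fixed at zero, and only the sign of $(\alpha\cdot\Theta^{n-1})$ remains to be controlled.

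I would then split into two cases. If $(\alpha\cdot\Theta^{n-1})\geq 0$, all inequalities $(\alpha^i\cdot\Theta^{n-i})\geq 0$ hold, so Proposition~\ref{4} produces an effective divisor $D$ with $[D]=\alpha$; if $(\alpha\cdot\Theta^{n-1})<0$, the same argument applied to $-\alpha$ produces an effective $D$ with $[D]=-\alpha$. Either way, $[D]^2=\alpha^2=0$, and Proposition~\ref{3} immediately upgrades this to $D\equiv mY$ for some abelian divisor $Y$ and some integer $m\geq 0$. Reinstating the possible sign change yields $\alpha=\pm m[Y]$, which is of the required form.

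I do not anticipate any real obstacle: the corollary is essentially the conjunction of Propositions~\ref{3} and \ref{4}, glued by the elementary remark that a nilpotent-order-two class has vanishing higher powers. The only point that needs to be handled explicitly is the case split on the sign of $(\alpha\cdot\Theta^{n-1})$, since Proposition~\ref{4} is not symmetric in $\alpha$ and $-\alpha$. The trivial subcase $\alpha=0$ is absorbed by taking $m=0$ (and is vacuous on a simple abelian variety, where no abelian divisor exists to begin with).
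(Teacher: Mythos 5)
Your proof is correct and follows exactly the route the paper intends: the corollary is stated without proof immediately after the author's remark that Proposition~\ref{4} shows either $\alpha$ or $-\alpha$ is effective when $\alpha^2=0$, and your case split on the sign of $(\alpha\cdot\Theta^{n-1})$ followed by an appeal to Proposition~\ref{3} is precisely that argument. No gaps.
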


Using Proposition~\ref{4}, we can prove the following:

\begin{lemma}\label{6}
The class of an abelian divisor is primitive.
\end{lemma}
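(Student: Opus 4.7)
Suppose $[Z] = m\alpha$ for some $\alpha\in\mbox{NS}(A)$ and some nonzero integer $m$. The plan is to show that $m = \pm 1$. Since $\mbox{NS}(A)$ is torsion-free and $[Z]^2 = 0$, the equation $m^2\alpha^2 = 0$ forces $\alpha^2 = 0$, so Corollary~\ref{5} applies directly to $\alpha$ and yields an abelian divisor $W$ together with an integer $k$ such that $\alpha = k[W]$. Hence $[Z] = mk[W]$, with $mk \neq 0$ since $[Z]\neq 0$ (because $(Z\cdot\Theta^{n-1}) > 0$).

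The heart of the argument is to prove that $Z = W$ as abelian subvarieties, since then $[Z] = mk[Z]$ combined with the torsion-freeness of $\mbox{NS}(A)$ gives $mk = 1$ and therefore $m = \pm 1$. Intersecting the relation $[Z] = mk[W]$ with $[W]\cdot\Theta^{n-2}$ and using $[W]^2 = 0$ produces
\[
(Z\cdot W\cdot\Theta^{n-2}) \;=\; mk\,(W^2\cdot\Theta^{n-2}) \;=\; 0.
\]
On the other hand, if $Z\neq W$, then $Z$ and $W$ are two distinct connected codimension-$1$ subgroups of $A$, so $Z+W = A$, which forces $(Z\cap W)_0$ to be an abelian subvariety of codimension $2$; moreover the surjectivity of the addition map $Z\times W\to A$ gives $T_0Z + T_0W = T_0A$, so the intersection of $Z$ and $W$ is transverse at every point, and $[Z\cdot W]$ is a positive integer multiple of $[(Z\cap W)_0]$ in $\frak{A}^2(A)$. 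Intersecting once more with $\Theta^{n-2}$ then yields a strictly positive number by the ampleness of $\Theta|_{(Z\cap W)_0}$ (or, when $n=2$, from the trivial fact that $|Z\cap W| \geq 1$). This contradicts the display above, so we must have $Z = W$, and the conclusion follows.

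The only step that requires some care is the strict positivity of $(Z\cdot W\cdot\Theta^{n-2})$ for distinct abelian divisors $Z$ and $W$; but this is exactly the computation that already appears inside the proof of Proposition~\ref{3}, so no genuinely new ingredient beyond what is already in the paper is needed.
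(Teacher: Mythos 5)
Your proof is correct and takes essentially the same route as the paper's: reduce (via Proposition~\ref{4} and Proposition~\ref{3}, i.e.\ Corollary~\ref{5}) to the case where the given class is a multiple of the class of an abelian divisor $W$, and then use the observation from the proof of Proposition~\ref{3} that two \emph{distinct} abelian divisors satisfy $(Z\cdot W\cdot\Theta^{n-2})>0$ to force $Z=W$. The only imprecision is the step deducing $\alpha^2=0$ from $m^2\alpha^2=0$ by citing torsion-freeness of $\mbox{NS}(A)$, whereas $\alpha^2$ lives in $\frak{A}^2(A)$; this is harmless, since the integer relations $m^i(\alpha^i\cdot\Theta^{n-i})=([Z]^i\cdot\Theta^{n-i})=0$ for $i\geq 2$ already give $(\alpha^i\cdot\Theta^{n-i})=0$, which is all that Propositions~\ref{3} and~\ref{4} actually require.
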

\begin{proof}
Let $Z$ be an abelian divisor, and assume that $Z\equiv mD$ for some divisor $D$ such that $[D]$ is primitive. Suppose that $m>0$; if not, then we replace $D$ by $-D$. Since $Z$ is effective and $m>0$, by the previous proposition we can assume that $D$ is effective. Now using Proposition~\ref{3}, we see that $D\equiv Y$ for some abelian divisor $Y$, and so $Z\equiv mY$. However, we then see that $[Z\cdot Y]=0$ in $\frak{A}^*(A)$, and using the argument used in the proof of Proposition~\ref{3}, we get that $Y\equiv Z$. Therefore $(m-1)Z\equiv0$, and so $m=1$.
\end{proof}
\begin{rem}
Another way of proving this lemma in characteristic 0 is using the following criterion: The class of a divisor $D$ is primitive if and only if $A[m]\nsubseteq K(D)$ for some $m\in\Z$ (where $A[m]$ denotes the group of $m$-torsion points of $A$). If $D$ is an abelian divisor, then by cardinality $A[m]\nsubseteq K(D)=D$ for all $m$.
\end{rem}

Using this lemma and Proposition~\ref{3}, we get:

\begin{coro}\label{7}
A class $\alpha\in \mbox{NS}(A)$ comes from an abelian divisor if and only if it is effective, primitive and $\alpha^2=0$ in $\frak{A}^*(A)$.
\end{coro}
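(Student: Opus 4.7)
The plan is to obtain this corollary essentially as a tidy combination of Proposition~\ref{1}, Proposition~\ref{3}, and Lemma~\ref{6}, with the only subtlety being sign/positivity issues when extracting a multiplicity.

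For the forward direction, suppose $\alpha=[Z]$ for an abelian divisor $Z$. Then $\alpha$ is effective because $Z$ is itself an effective prime Weil divisor representing its class; $\alpha^2=0$ by Proposition~\ref{1} (or directly by translation as in its proof); and $\alpha$ is primitive by Lemma~\ref{6}. This direction is immediate once the earlier results are in place.

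For the reverse direction, assume $\alpha$ is effective, primitive and $\alpha^2=0$. Choose an effective divisor $D$ with $[D]=\alpha$. Since $[D]^2=0$, Proposition~\ref{3} yields an abelian divisor $Y$ and an integer $m$ with $D\equiv mY$, hence $\alpha=m[Y]$. The key point is that $m$ is positive: indeed, the proof of Proposition~\ref{3} writes $D=\sum m_i F_i$ with $m_i>0$, concludes that all $F_i$ are numerically equivalent to a common abelian divisor $Y$, and thus produces $m=\sum m_i>0$. Since $\alpha$ is primitive, the equation $\alpha=m[Y]$ forces $m=\pm1$, and positivity of $m$ then forces $m=1$, giving $\alpha=[Y]$.

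The only potentially tricky step is making sure we land in the $m=+1$ case rather than $m=-1$; the definition of primitivity only rules out $|m|\neq 1$, so positivity must be pulled in separately. This is handled by using effectivity of $\alpha$ together with the explicit construction in Proposition~\ref{3}, which never introduces a sign. Beyond that, the argument is purely a bookkeeping synthesis of the preceding propositions and lemma, with no new input required.
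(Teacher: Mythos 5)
Your proposal is correct and follows essentially the same route as the paper, which simply derives the corollary from Proposition~\ref{3} and Lemma~\ref{6}; you have merely made explicit the bookkeeping the paper leaves implicit. In particular, your observation that the multiplicity $m$ produced by Proposition~\ref{3} is automatically positive (being a sum of the positive coefficients $m_i$) is exactly the right way to rule out the $m=-1$ case, so no new idea is needed beyond what the paper intends.
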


We define the \emph{degree} of a divisor $D$ on $A$ to be
$$\deg D:=(D\cdot\Theta^{n-1}).$$
In the same way we define the degree of an algebraic class. The degree of a curve $C$ on $A$ is defined analogously as $\deg C:=(C\cdot\Theta)$. Notice that if $\Theta$ is very ample, then the degree of a divisor coincides with the degree of $\varphi(D)$ in $\P^{h^0(\Theta)-1}$, where $\varphi:A\to\P^{h^0(\Theta)-1}$ is the embedding associated with $\Theta$.

With all we have said so far, we can prove our first main result:

\begin{theo}\label{8}
Let $A$ be an abelian variety of dimension $n$. Then the map $Z\mapsto [Z]$ induces a bijective correspondence between abelian divisors on $A$ and primitive elements $\alpha\in \mbox{NS}(A)$ that satisfy $\alpha^2=0$ in $\frak{A}^*(A)$ and $\deg\alpha>0$.
\end{theo}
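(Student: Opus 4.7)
The plan is to assemble the previously established results into the desired correspondence by checking well-definedness, surjectivity, and injectivity. Most of the work has already been done: Corollary~\ref{7} characterizes classes of abelian divisors as primitive effective classes squaring to zero, so the only substantive tasks are trading effectivity for the numerical condition $\deg\alpha>0$, and ruling out distinct abelian divisors giving the same class. The main subtlety is the sign argument in surjectivity --- the condition $\deg\alpha>0$ is what selects the correct representative out of $\{\alpha,-\alpha\}$ offered by Proposition~\ref{4}, and this is precisely the upgrade that Theorem~\ref{8} provides over Corollary~\ref{7}.

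For well-definedness and surjectivity: if $Z$ is an abelian divisor then $[Z]^2=0$ by Proposition~\ref{1}, $[Z]$ is primitive by Lemma~\ref{6}, and $\deg[Z]=(Z\cdot\Theta^{n-1})>0$ because $\Theta|_Z$ is ample on the positive-dimensional $Z$, so $((\Theta|_Z)^{n-1})>0$ by Nakai-Moishezon on $Z$ and hence $(Z\cdot\Theta^{n-1})>0$ by the projection formula. Conversely, given primitive $\alpha$ with $\alpha^2=0$ and $\deg\alpha>0$, the remark after Proposition~\ref{4} guarantees that either $\alpha$ or $-\alpha$ is effective; the same ample-restriction argument applied to each irreducible component shows that any nonzero effective class has strictly positive degree, so the hypothesis $\deg\alpha>0$ forces $\alpha$ itself to be effective, and Corollary~\ref{7} then produces an abelian divisor $Z$ with $[Z]=\alpha$.

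For injectivity, suppose $Z_1,Z_2$ are abelian divisors with $[Z_1]=[Z_2]$, so $(Z_1\cdot Z_2\cdot\Theta^{n-2})=([Z_1]^2\cdot\Theta^{n-2})=0$. If $Z_1\neq Z_2$, distinct irreducible codimension-$1$ subvarieties intersect in codimension exactly $2$, and the argument used inside the proof of Proposition~\ref{3} (recognizing $(Z_1\cap Z_2)_0$ as an abelian subvariety of codimension $2$) forces $(Z_1\cdot Z_2\cdot\Theta^{n-2})>0$, a contradiction; hence $Z_1=Z_2$. No serious obstacle is expected, since Propositions~\ref{1}, \ref{3}, \ref{4}, Corollary~\ref{5}, Lemma~\ref{6}, and Corollary~\ref{7} already package all the geometric content needed, and the proof of Theorem~\ref{8} reduces to the bookkeeping outlined above.
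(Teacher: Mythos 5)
Your proposal is correct and follows essentially the same route as the paper: surjectivity via Proposition~\ref{4} (effectivity from the sign of the degree) combined with Corollary~\ref{7}, and injectivity via the intersection argument from the proof of Proposition~\ref{3}, which the paper simply cites as ``already proven.'' The only cosmetic difference is that you pass through the ``$\alpha$ or $-\alpha$ is effective'' remark and select by sign, whereas the paper notes directly that $\deg\alpha>0$ together with $(\alpha^i\cdot\Theta^{n-i})=0$ for $i\geq2$ verifies the hypotheses of Proposition~\ref{4}; both amount to the same thing.
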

\begin{proof} Injectivity was already proven. To show surjectivity, let $\alpha$ be a primitive class that satisfies $(\alpha\cdot\Theta^{n-1})>0$ and $\alpha^2=0$. Since $\alpha^2=0$, we have that $(\alpha^i\cdot\Theta^{n-i})=0$ for every $i\geq 2$. Proposition~\ref{4} says that $\alpha$ is numerically equivalent to (the class of) an effective divisor, and Corollary~\ref{7} says that $\alpha$ actually comes from an abelian divisor.
\end{proof}

\section{The forms $q_r$}

In this section we will mostly concentrate on principally polarized abelian varieties (ppavs); we will mention when this hypothesis is necessary.

Let $\natural:\mbox{NS}(A)\to \mbox{NS}(A)$ denote the endomorphism
$$\alpha\mapsto\alpha^\natural=(\Theta^n)\alpha-(\deg\alpha)[\Theta].$$
We define the homogeneous polynomials
$$q_r(\alpha):=-\frac{1}{(r-1)(\Theta^n)}((\alpha^\natural)^r\cdot\Theta^{n-r})$$
for $2\leq r\leq n$.

By expanding the right hand side, we have that
$$q_r(\alpha)=(-1)^{r}(\deg\alpha)^r+\frac{(\Theta^n)}{r-1}\sum_{m=2}^r{r \choose m}(\Theta^n)^{m-2}(-1)^{r-m+1}(\deg\alpha)^{r-m}
(\alpha^m\cdot\Theta^{n-m}).$$

\begin{lemma}\label{9}
If $[\Theta]$ is primitive in $\mbox{NS}(A)$ and $\alpha\in \mbox{NS}(A)$, then $q_r(\alpha)\leq 0$ for all $r=2,\ldots,n$ if and only if $\alpha\in\Z[\Theta]$.
\end{lemma}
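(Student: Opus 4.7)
The plan is to route the whole statement through Proposition~\ref{4}: the hypotheses on the $q_r$ are set up precisely so that $\beta := \alpha^\natural$ satisfies the numerical conditions of that proposition together with an extra degree-zero equality, which will squeeze $\beta$ down to $0$ in $\mbox{NS}(A)$, after which primitivity of $[\Theta]$ does the rest. The reverse implication is immediate: if $\alpha = k[\Theta]$, then $\deg\alpha = k(\Theta^n)$, so $\alpha^\natural = (\Theta^n)(k[\Theta]) - k(\Theta^n)[\Theta] = 0$ and every $q_r(\alpha)$ vanishes.

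For the forward direction, set $\beta := \alpha^\natural$ and first record the identity $(\beta \cdot \Theta^{n-1}) = (\Theta^n)\deg\alpha - (\deg\alpha)(\Theta^n) = 0$, which comes for free from the definition of $\natural$. Next, multiplying the inequality $q_r(\alpha) \leq 0$ by the positive factor $-(r-1)(\Theta^n)$ turns it into $(\beta^r \cdot \Theta^{n-r}) \geq 0$ for $r = 2,\ldots, n$. Combined with the $r=1$ equality this yields $(\beta^i \cdot \Theta^{n-i}) \geq 0$ for all $1 \leq i \leq n$, which is exactly the hypothesis of Proposition~\ref{4}. Hence $\beta$ is numerically equivalent to an effective divisor $D = \sum m_i F_i$ with prime $F_i$ and $m_i > 0$. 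Because $\Theta$ is ample, each $\Theta|_{F_i}$ is ample on the $(n-1)$-dimensional variety $F_i$, so $(F_i \cdot \Theta^{n-1}) = (\Theta|_{F_i})^{n-1} > 0$; then the relation $\deg D = \sum m_i(F_i \cdot \Theta^{n-1}) = 0$ forces $D = 0$, i.e.\ $\beta = 0$ in $\mbox{NS}(A)$.

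Finally, $\beta = 0$ reads $(\Theta^n)\alpha = (\deg\alpha)[\Theta]$, so the image of $\alpha$ in the quotient $\mbox{NS}(A)/\Z[\Theta]$ is annihilated by the positive integer $(\Theta^n)$. Primitivity of $[\Theta]$ is the assertion that this quotient is torsion-free, and so $\alpha \in \Z[\Theta]$. The one step I expect to require the most care is the positivity $(F_i \cdot \Theta^{n-1}) > 0$ for a general prime divisor $F_i$ over an arbitrary algebraically closed field; this is the only non-formal ingredient after Proposition~\ref{4}, but it is standard once one invokes ampleness of the restriction $\Theta|_{F_i}$.
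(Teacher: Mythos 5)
Your proof is correct and follows essentially the same route as the paper: both directions reduce to showing $\alpha^\natural$ satisfies the hypotheses of Proposition~\ref{4} and then using $\deg\alpha^\natural=0$ to force $\alpha^\natural\equiv 0$, after which primitivity of $[\Theta]$ finishes. The only difference is that you spell out why an effective divisor of degree zero is numerically trivial (positivity of $(F_i\cdot\Theta^{n-1})$ for each prime component), a step the paper leaves implicit.
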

\begin{proof}
It is clearly seen that $q_r([\Theta])=0$. Conversely, if $q_r(\alpha)\leq0$, then we would have that $((\alpha^\natural)^r\cdot \Theta^{n-r})\geq0$ for all $r$ (and by the definition of $\alpha^\natural$, $\deg\alpha^\natural=0$), so by Proposition~\ref{4} we have that $\alpha^\natural= m[D]$, where $D$ is an effective divisor on $A$ and $m\geq0$. But if $D\not\equiv 0$, then $\deg D>0$, a contradiction. Therefore $\alpha^\natural\equiv0$, and so $(\Theta^n)\alpha\in\Z[\Theta]$. Since $[\Theta]$ is primitive, we obtain that $\alpha\in\Z[\Theta]$.
\end{proof}

\begin{rem}\label{10}
We observe that if $\alpha\in \mbox{NS}(A)$ satisfies $\alpha^2=0$ in $\frak{A}^*(A)$, then
$$q_r(\alpha)=(-1)^{r}(\deg\alpha)^{r}$$
for all $r$.
\end{rem}

\begin{rem}\label{11}
It is easy to see that the forms $q_r$ descend to forms on $\mbox{NS}(A)/\Z[\Theta]$. In dimension 3, the previous lemma shows that $q_2$ is positive definite on $\mbox{NS}(A)/\mathbb{Z}[\Theta]$. Indeed, if $q_2(\alpha)\leq 0$ and $q_3(\alpha)\leq 0$, then Lemma~\ref{9} says that $\alpha\in\Z[\Theta]$. If $q_2(\alpha)\leq 0$ and $q_3(\alpha)\geq0$, then $q_2(-\alpha)\leq0$ and $q_3(-\alpha)\leq0$, and we have the same situation.
\end{rem}

\vspace{0.5cm}

\begin{lemma}\label{12}
If $Z$ is an abelian divisor on $A$ and $\Theta$ is a principal polarization, then $\deg Z=(n-1)!(E\cdot\Theta)$, where $E$ is the abelian complement of $Z$ in $A$.
\end{lemma}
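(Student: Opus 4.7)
The plan is to reduce the identity to the equality $\chi(\Theta|_Z) = \chi(\Theta|_E)$ of Euler characteristics of the induced polarizations, and then verify that equality by computing both sides through the kernels of the associated polarization maps.

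Set $\Theta_Z := \iota_Z^*\Theta$ and $\Theta_E := \iota_E^*\Theta$, each ample on its respective abelian subvariety. By the projection formula, $\deg Z = (\Theta_Z^{n-1})_Z$, and Riemann--Roch on the abelian variety $Z$ of dimension $n-1$ yields $(\Theta_Z^{n-1}) = (n-1)!\,\chi(\Theta_Z)$. Similarly $\deg E = (\Theta_E)_E = \chi(\Theta_E)$ on the elliptic curve $E$. Hence the lemma becomes $\chi(\Theta_Z) = \chi(\Theta_E)$, and since both are positive integers it suffices to show $\chi(\Theta_Z)^2 = \chi(\Theta_E)^2$; using the standard identity $\chi(L)^2 = \deg \phi_L$ for ample line bundles on an abelian variety, this reduces to $\deg \phi_{\Theta_Z} = \deg \phi_{\Theta_E}$.

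Next I compute each of these degrees as $|Z \cap E|$. Factor $\phi_{\Theta_Z}$ as $Z \xrightarrow{\iota_Z} A \xrightarrow{\phi_\Theta} A^\vee \xrightarrow{\iota_Z^\vee} Z^\vee$. Since $\phi_\Theta$ is an isomorphism (because $\Theta$ is a principal polarization), the kernel of $\phi_{\Theta_Z}$ corresponds to $\iota_Z(Z) \cap \phi_\Theta^{-1}(\ker \iota_Z^\vee)$. The identity component of $\ker(\iota_Z^\vee)$ is $(A/Z)^\vee$, and the Birkenhake--Lange complement $E$ of $Z$ in a ppav is defined precisely by the orthogonality relation $\phi_\Theta(E) = (A/Z)^\vee$. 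It follows that $\ker \phi_{\Theta_Z} \cong Z \cap E$ (up to the finite extension by the component group of $\ker\iota_Z^\vee$, which does not affect cardinality of the full kernel), so $\deg \phi_{\Theta_Z} = |Z \cap E|$; by symmetry $\deg \phi_{\Theta_E} = |E \cap Z|$. These are equal, so $\chi(\Theta_Z) = \chi(\Theta_E)$ and therefore $\deg Z = (n-1)!\,\deg E$.

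The main obstacle is invoking the orthogonality $\phi_\Theta(E) = (A/Z)^\vee$ that characterizes the BL complement in the principally polarized setting; once this is granted, the remainder is a short Riemann--Roch computation combined with an identification of kernels. An alternative route would proceed by pulling back $\Theta$ under the addition isogeny $\mu: Z \times E \to A$ and computing $(\mu^*\Theta)^n$ two ways, but it ultimately relies on the same orthogonality (manifested as the absence of a cross term in $\mu^*\Theta$) and additionally forces one to evaluate $\deg\mu = (\deg E)^2$ separately, so the kernel-based argument above is more economical.
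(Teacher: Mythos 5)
Your proof is correct and follows essentially the same route as the paper: Riemann--Roch to reduce to $\chi(\Theta|_Z)=\chi(\Theta|_E)$, the identity $\chi(L)^2=\deg\phi_L$, and the identification $K(\Theta|_Z)\simeq Z\cap E\simeq K(\Theta|_E)$. The only difference is that you spell out the kernel identification via the factorization through $\phi_\Theta$ and the orthogonality defining the Birkenhake--Lange complement, which the paper simply cites as a known fact.
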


\begin{proof}
Set $d:=(\Theta\cdot E)$; in other words, $\Theta$ restricted to $E$ is a divisor of degree $d$. Using Riemann-Roch, the fact that $K(\Theta|_Z)\simeq Z\cap E\simeq K(\Theta|_E)$ and $\chi(\Theta|_Z)^2=|K(\Theta|_Z)|$, we have that
$$\deg Z=(Z\cdot\Theta^{n-1})=((\Theta|_Z)^{n-1})=(n-1)!\chi(\Theta|_Z)=(n-1)!\chi(\Theta|_E)=(n-1)!d.$$
\end{proof}

The next three lemmas are technical in nature and will be used in the proof of our main theorem. The first of the three is elementary and well known.

\begin{lemma}\label{13}
If $m\in\Z\backslash\{\pm1\}$ and $n\in\Z_{>0}$, then $m^{n-1}\mid n!$ if and only if $m=\pm 2$ and $n$ is a power of $2$.
\end{lemma}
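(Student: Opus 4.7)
The plan is to reduce to the case $m > 0$ (since divisibility is unaffected by sign) and prove the statement using Legendre's formula for the $p$-adic valuation of a factorial, namely
$$v_p(n!) = \frac{n - s_p(n)}{p-1},$$
where $s_p(n)$ denotes the sum of the base-$p$ digits of $n$.

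For the easy direction, suppose $m = 2$ and $n = 2^k$. Since $s_2(2^k) = 1$, Legendre's formula gives $v_2(n!) = 2^k - 1 = n - 1$, so $2^{n-1}$ divides $n!$ exactly, proving $m^{n-1} \mid n!$.

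For the harder direction, assume $m \geq 2$ and $m^{n-1} \mid n!$ (we may safely restrict to $n \geq 2$, as $n = 1$ is trivial from both sides). The first step is to rule out odd prime divisors of $m$: for any prime $p$ dividing $m$, the divisibility $p^{n-1} \mid n!$ forces $n - 1 \leq v_p(n!) = (n - s_p(n))/(p-1)$. Since $s_p(n) \geq 1$, this yields $(p-1)(n-1) \leq n - 1$, and because $n \geq 2$ we conclude $p - 1 \leq 1$, i.e., $p = 2$. Hence $m$ is a power of $2$, say $m = 2^k$.

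The second step pins down $k$ and the shape of $n$. The divisibility $2^{k(n-1)} \mid n!$ rewrites as $k(n-1) \leq v_2(n!) = n - s_2(n) \leq n - 1$, forcing $k \leq 1$ and therefore $k = 1$ (i.e.\ $m = 2$). Substituting back gives $n - 1 \leq n - s_2(n)$, so $s_2(n) = 1$, which is exactly the condition that $n$ is a power of $2$. No step is a genuine obstacle; the only subtlety is recognizing that the sharp bound $v_p(n!) = (n - s_p(n))/(p-1)$ immediately kills all odd primes in one stroke, rather than attempting a case analysis on small $n$.
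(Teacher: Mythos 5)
Your proof is correct and rests on the same tool as the paper's --- Legendre's formula --- but you use it in the sharp digit-sum form $v_p(n!)=(n-s_p(n))/(p-1)$, whereas the paper works with the truncated sum $\sum_i\lfloor n/p^i\rfloor$ and bounds it above by the geometric series $n/(p-1)$. This difference buys you three things. First, your inequality $(p-1)(n-1)\leq n-s_p(n)\leq n-1$ eliminates odd primes cleanly for every $n\geq2$, while the paper's coarser bound $n-1\leq n/(p-1)$ does not actually exclude $p=3$ when $n=2$ (there $n/(p-1)=1=n-1$), so the paper's argument has a small untreated edge case. Second, the paper only constrains the prime divisors of $m$ (``we will prove this for a prime number $p$ that divides $m$''), hence only concludes $m=\pm2^k$; your second step, $k(n-1)\leq n-s_2(n)\leq n-1$ forcing $k=1$, is needed to get $m=\pm2$ exactly and is absent from the paper. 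Third, you verify the converse direction ($v_2(2^k!)=2^k-1$), which the paper omits entirely. One caveat: your claim that $n=1$ is ``trivial from both sides'' is not right --- for $n=1$ one has $m^0=1\mid 1!$ for \emph{every} $m$, so the forward implication of the lemma as literally stated fails at $n=1$ (e.g.\ $m=3$); this is really a defect of the statement, which both you and the paper implicitly repair by assuming $n\geq2$, but it should be flagged rather than dismissed as trivial.
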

\begin{proof}
We will prove this for a prime number $p$ that divides $m$. If $p$ is a prime number such that $p^{n-1}\mid n!$, then Legendre's formula for the highest power of a prime appearing in $n!$ says that
$$n-1\leq\left\lfloor\frac{n}{p}\right\rfloor+\left\lfloor\frac{n}{p^2}\right\rfloor+\cdots+\left\lfloor\frac{n}{p^l}
\right\rfloor$$
for $l=\lfloor\log_p(n)\rfloor$. If $S$ denotes the right hand side of the inequality, we have that
$$S\leq n\left(\frac{1}{p}+\frac{1}{p^2}+\cdots+\frac{1}{p^l}\right)=n\left(\frac{1-\frac{1}{p^l}}{p-1}\right).$$
This is obviously less than or equal to $\frac{n}{p-1}$, and since $n-1\leq S$, we get that $p$ is necessarily 2. Replacing $p=2$ above and clearing the equations, we arrive at $n\leq2^l=2^{\lfloor\log_2(n)\rfloor}$. If $n$ is not a multiple of $2$, then this is impossible. Therefore we conclude that $n=2^k$ for some $k$ and $p=2$.
\end{proof}

\begin{defi}Let $\mbox{NS}(A,\Theta):=\mbox{NS}(A)/\Z[\Theta]$ be the \emph{polarized N\'eron-Severi group of} $A$. As we said above, it is easy to see that the forms $q_r$ are well defined on $\mbox{NS}(A,\Theta)$.
\end{defi}

\begin{lemma}\label{14}
Let $(A,\Theta)$ be a ppav. Then the class of an abelian divisor in $\mbox{NS}(A,\Theta)$ is primitive.
\end{lemma}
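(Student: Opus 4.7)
The plan is to argue by contradiction, using the principal polarization to convert the hypothesis into a statement about endomorphisms of $A$, and then extract a contradiction from a characteristic polynomial computed modulo $m$.

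Suppose, for contradiction, that $[Z]$ is not primitive in $\mbox{NS}(A,\Theta)$; then we can write $[Z] = m[D] + k[\Theta]$ in $\mbox{NS}(A)$ for some divisor $D$ and some integer $m\geq 2$. By Lemma~\ref{6}, $[Z]$ is primitive in $\mbox{NS}(A)$, which forces $\gcd(m,k)=1$ (any common factor would otherwise give a nontrivial divisor of $[Z]$ in $\mbox{NS}(A)$).

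Because $\Theta$ is a principal polarization, $\phi_\Theta:A\to A^\vee$ is an isomorphism, and each class $\alpha\in \mbox{NS}(A)$ yields an endomorphism $s_\alpha:=\phi_\Theta^{-1}\circ\phi_\alpha\in\mbox{End}(A)$, with $\alpha\mapsto s_\alpha$ being $\Z$-linear. From $[Z]=m[D]+k[\Theta]$ I would deduce
$$s_Z = m\,s_D + k\cdot\mbox{id}_A\qquad\text{in }\mbox{End}(A).$$
The geometric input is that $Z\subseteq\ker\phi_Z$: since $Z$ is a subgroup of $A$, $t_a^{-1}(Z)=Z$ for every $a\in Z$, so $t_a^*\mathcal{O}(Z)\cong\mathcal{O}(Z)$ and $\phi_Z(a)=0$. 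Hence $Z\subseteq\ker s_Z$, so $s_Z$ has positive-dimensional kernel and $\deg(s_Z)=0$.

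Next I would pass to the (analytic) characteristic polynomial $\chi_{s_Z}(t)\in\Z[t]$ of degree $2n$, characterized by $\chi_{s_Z}(j)=\deg(j\cdot\mbox{id}_A - s_Z)$ for all $j\in\Z$. Writing $s_Z - k\cdot\mbox{id}_A = m\,s_D$ and using the standard expansion
$$\chi_{mh}(t)=t^{2n}-m\,\mbox{Tr}(h)\,t^{2n-1}+\cdots+(-m)^{2n}\deg(h),$$
every non-leading coefficient of $\chi_{ms_D}(t)$ is divisible by $m$; substituting $t\mapsto t-k$ then gives the key congruence
$$\chi_{s_Z}(t)\equiv(t-k)^{2n}\pmod m\quad\text{in }\Z[t].$$
Evaluating at $t=0$ and using $\chi_{s_Z}(0)=\deg(-s_Z)=\deg(s_Z)=0$, I obtain $m\mid k^{2n}$; combined with $\gcd(m,k)=1$, this forces $m=1$, contradicting $m\geq 2$.

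The main step to pin down carefully is the congruence $\chi_{s_Z}(t)\equiv(t-k)^{2n}\pmod m$, which follows from the expansion of $\chi_{mh}$ above (valid in any characteristic, since $\chi_{s_Z}$ is defined intrinsically by the degree function), or equivalently from the action of $s_Z\equiv k\cdot\mbox{id}_A$ on $T_\ell A/mT_\ell A$ for $\ell$ prime to $m$ and to the characteristic. The clean geometric observation $Z\subseteq\ker s_Z$ is what makes $\chi_{s_Z}(0)=0$ and drives the contradiction.
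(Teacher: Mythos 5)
Your proof is correct, but it takes a genuinely different route from the paper's. The paper stays entirely inside intersection theory: from $[Z]^2=0$ it derives the identities $m^r(D^r\cdot\Theta^{n-r})=(-s)^{r-1}(r\deg Z-s(\Theta^n))$, extracts divisibility constraints on $m$, and eliminates $m>1$ through a case analysis resting on Lemma~\ref{13} (Legendre's formula: $m^{n-1}\mid n!$ forces $m=\pm2$ and $n=2^k$). You instead use principality to embed $\mbox{NS}(A)$ into $\mbox{End}(A)$ via $\alpha\mapsto s_\alpha=\phi_\Theta^{-1}\circ\phi_\alpha$, observe that $Z\subseteq\ker\phi_Z$ (since $Z$ is a subgroup) forces $\deg s_Z=0$, and read off $m\mid k^{2n}$ from $\chi_{s_Z}(t)=\chi_{ms_D}(t-k)\equiv(t-k)^{2n}\pmod m$. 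All the ingredients check out: $\alpha\mapsto\phi_\alpha$ is additive and depends only on the algebraic class; the characteristic polynomial lies in $\Z[t]$ and satisfies $\chi_f(j)=\deg(j\cdot\mbox{id}-f)$ in any characteristic (\cite{Mum}, \S 19), with $\deg=0$ on non-isogenies; and $\chi_{mh}(t)=m^{2n}\chi_h(t/m)\equiv t^{2n}\pmod m$. Your argument is shorter, avoids the factorial arithmetic and the three-way case split entirely, and makes transparent exactly where principality enters (the integrality of $\phi_\Theta^{-1}$), which is consistent with the counterexample the paper gives right after the lemma for non-principal $\Theta$; the cost is importing the theory of characteristic polynomials of endomorphisms rather than working only with divisor classes. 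One small caveat: your parenthetical alternative justification via ``the action on $T_\ell A/mT_\ell A$ for $\ell$ prime to $m$'' is garbled --- that quotient vanishes when $\ell\nmid m$; you would want $A[p]$ for each prime $p\mid m$ (away from the characteristic). But your primary justification via the expansion of $\chi_{mh}$ is complete, so this aside can simply be dropped.
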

\begin{proof}
Let $Z$ be an abelian divisor on $A$, and assume that $Z\equiv mD+s\Theta$ for some divisor $D$ and $m,s\in\Z$. We can also assume that $s\neq 0$ and actually $(m,s)=1$, since $Z$ is primitive in $\mbox{NS}(A)$. Moreover, after changing $D$ with $-D$ if necessary, we can assume that $m>0$. Since $[Z]^2=0$ in $\frak{A}^*(A)$, we get the following formula:
\begin{equation}
\nonumber m^r(D^r\cdot\Theta^{n-r})=((Z-s\Theta)^r\cdot\Theta^{n-r})=(-s)^{r-1}(r\deg Z-s(\Theta^n))
\end{equation}
for $1\leq r\leq n$. Assume that $(D^n)\neq0$ and $(D^{n-1}\cdot\Theta)\neq0$. We see that
$$m^{n-1}(D^{n-1}\cdot\Theta)=(-s)^{n-2}((n-1)\deg Z-s(\Theta^n))$$
and
$$m^n(D^n)=(-s)^{n-2}(n\deg Z-s(\Theta^n)).$$
This means that $m^{n-1}\mid(n-1)\deg Z-s(\Theta^n)$ and $m^n\mid n\deg Z-s(\Theta^n)$, and so $m^{n-1}\mid\deg Z$. But then $m^{n-1}\mid s(\Theta^n)$ and so $m^{n-1}\mid n!$, since $(\Theta^n)=n!$ in this case. By Lemma~\ref{13} we conclude that $n=2^k$ for some $k$ and $m=2$. In this case, we have that $2^n\mid n!(d-s)$, where $\deg Z=d(n-1)!$. If $d$ is even, then $d-s$ is odd and so $2^n\mid n!$. Based on the previous lemma, it is easy to see that this is impossible. If $d$ is odd, then
$$2^{n-1}\mid(n-1)!((n-1)d-sn)=(2^k-1)!((2^k-1)d-s2^k),$$
and since $(2^k-1)d-s2^k$ is odd, we have that $2^{n-1}\mid (n-1)!$, a contradiction. Therefore we must have that $m=1$.

If $(D^n)=0$, then $\deg Z=(n-1)!s$ and $(D^{n-1}\cdot\Theta)\neq0$. Therefore, $m^{n-1}\mid(n-1)(n-1)!s-sn!$, and so $m^{n-1}\mid(n-1)!$. Thus $m=1$. If $(D^{n-1}\cdot\Theta)=0$, then $(n-1)\deg Z=sn!$. Let $\deg Z=d(n-1)!$. Then $m^n\mid n!(d-s)$, and since $d=\frac{sn}{n-1}$, we get that
$$m^n\mid n!\left(\frac{sn}{n-1}-s\right)=s(n^2(n-2)!-n!)=s(n-2)!n.$$
Therefore, if $p$ is a prime that divides $m$, we have that either $p^n\mid(n-2)!$ or $p^n\mid n$, which is impossible based on what we have said above. We conclude that $m=1$, and so $[Z]$ is primitive in $\mbox{NS}(A,\Theta)$.
\end{proof}

We observe that this is no longer true when $\Theta$ is not a principal polarization. For instance, take $A=E_1\times E_2$ and $\Theta=\{0\}\times E_2+2(E_1\times \{0\})$ where $E_1$ and $E_2$ are elliptic curves. Putting $Z=\{0\}\times E_2$, we get that $Z=-2(E_1\times\{0\})+\Theta$, and so is not primitive in $\mbox{NS}(A,\Theta)$. This gives a counterexample to Theorem 3.2 of Kani \cite{Kani} (which implies that the class of an elliptic curve in $\mbox{NS}(A,\Theta)$ is primitive for an arbitrary primitive polarization). 

\begin{lemma}\label{15}
Take $\alpha\in\mbox{NS}(A)$ and let $d,k\in\Z$ be such that $k$ is positive, $\deg\alpha=d$ and $q_r(\alpha)=(-1)^rk^r$ for $r=2,\ldots,n$. Then $d-k\equiv0\mbox{ (mod }n!)$.
\end{lemma}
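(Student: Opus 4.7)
The plan has three steps: linearize the hypotheses into a triangular system for the mixed intersection numbers, solve that system in closed form via exponential generating functions, and extract $n!\mid d-k$ prime by prime.

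\emph{Step 1 (linearization).} Set $a_m:=(\alpha^m\cdot\Theta^{n-m})\in\Z$ and $c_m:=(n!)^{m-1}a_m$, with conventions $c_0=1$ and $c_1=d$ (consistent since $(\Theta^n)=n!$ in a ppav). Expanding $\alpha^\natural=n!\alpha-d[\Theta]$ binomially in the identity $((\alpha^\natural)^r\cdot\Theta^{n-r})=(-1)^{r+1}(r-1)\,n!\,k^r$ (which rewrites $q_r(\alpha)=(-1)^rk^r$) and dividing by $n!$, one obtains for each $r\in\{0,1,\ldots,n\}$ the triangular identity
\[
\sum_{m=0}^{r}\binom{r}{m}(-d)^{r-m}c_m \;=\; (-1)^{r+1}(r-1)k^r.
\]

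\emph{Step 2 (closed form).} Package $\{c_m\}$ into the exponential generating function $f(t):=\sum_{m\ge 0}c_m\,t^m/m!$. A routine calculation shows the identities above amount to
\[
e^{-dt}f(t)\;\equiv\;e^{-kt}(1+kt)\pmod{t^{n+1}},
\]
hence $f(t)\equiv e^{(d-k)t}(1+kt)\pmod{t^{n+1}}$. Extracting coefficients yields the clean formula
\[
c_m \;=\; (d-k)^{m-1}\bigl(d+(m-1)k\bigr),\qquad m=0,1,\ldots,n,
\]
so that the integrality $a_m\in\Z$ translates into the constraints
\[
(n!)^{m-1}\bigm|(d-k)^{m-1}\bigl(d+(m-1)k\bigr),\qquad m=2,\ldots,n.
\]

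\emph{Step 3 (prime-by-prime conclusion).} This is the main obstacle: passing from these $n-1$ product-divisibility conditions to the single statement $n!\mid d-k$. Fix a prime $p\le n$, let $v_p$ denote the $p$-adic valuation, and write $a=v_p(n!)$, $b=v_p(d-k)$, $c=v_p(k)$, $\delta=d-k$. Suppose for contradiction $b<a$, and set $s:=a-b>0$; the constraints become
\[
v_p(\delta+mk)\;\ge\;(m-1)s\qquad(m=2,\ldots,n).
\]
Split on whether $c\ge b$ or $c<b$. When $c\ge b$, specialize to $m=M$, the largest multiple of $p$ in $\{2,\ldots,n\}$: then $v_p(Mk)>b$ forces $v_p(\delta+Mk)=b$, giving $Mb\ge(M-1)a$. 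When $c<b$, specialize to $m=M'$, the largest element of $\{2,\ldots,n\}$ coprime to $p$: then $v_p(M'k)=c<b$ forces $v_p(\delta+M'k)=c$, and combining $c\ge(M'-1)s$ with $c<b$ gives $M'b>(M'-1)a$. By integrality of $b$, each inequality forces $b\ge a$ once one checks that $M>a$ (resp.\ $M'\ge a$); both facts are short consequences of Legendre's formula $v_p(n!)=(n-s_p(n))/(p-1)$. Aggregating $v_p(d-k)\ge v_p(n!)$ over all primes $p\le n$ yields $n!\mid d-k$.
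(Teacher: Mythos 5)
Your proof is correct, and it reaches the same closed formula $c_m=(d-k)^{m-1}\bigl(d+(m-1)k\bigr)$ that the paper calls $x_r$ (the paper derives it by unwinding the triangular recursion inductively; your generating-function identity $f(t)\equiv e^{(d-k)t}(1+kt)\pmod{t^{n+1}}$ is a tidier packaging of the same computation). The $p$-adic endgame, however, is genuinely different. The paper assumes $t=v_p(d-k)<s=v_p(n!)$, uses \emph{all} of the divisibilities $p^{(s-t)(r-1)}\mid d+(r-1)k$ together with a subtraction of consecutive terms to force $p^{(s-t)(n-2)}\mid k$, deduces $s\ge n-1$, and then invokes its Lemma 13 to reduce to the single residual case $p=2$ with $n$ a power of $2$, which is finished by a separate estimate. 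You instead split on $v_p(k)$ versus $v_p(d-k)$ and exploit a single well-chosen exponent in each branch ($M=p\lfloor n/p\rfloor$, resp.\ the largest $m\le n$ prime to $p$), so that $v_p(\delta+mk)$ is computed exactly as a minimum of valuations; this bypasses Lemma 13 entirely and is arguably cleaner. Two small points to pin down, neither a real gap: (i) the inequalities $M>v_p(n!)$ and $M'\ge v_p(n!)$ do require the Legendre-formula check you allude to, and the second is genuinely tight ($M'=v_2(n!)=n-1$ when $n$ is a power of $2$), so the strictness of $b>a-a/M'$ is doing real work there; (ii) when $n=2$ and $p=2$ the element $M'$ does not exist, but then $b<a=1$ forces $b=0$, so the branch $v_p(k)<b$ is vacuous and only the first case is needed (the paper simply declares $n=2$ trivial).
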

\begin{proof}
Let $x_r:=n!^{r-1}(\alpha^r\cdot\Theta^{n-r})$. It is easy to see, by expanding the definition of $q_r$, that for $r\geq3$
$$x_r=(r-1)(-1)^r(d^r-k^r)+\sum_{m=2}^{r-1}\binom{r}{m}(-1)^{r-m+1}d^{r-m}x_m$$
and $x_2=d^2-k^2$. By using induction, we arrive at the following expression for the general term:
$$x_r=(d-k)^{r-1}(d+(r-1)k).$$
We can replace $\alpha$ by $\alpha+m\Theta$ for $m\in\Z$ and assume that $d\geq0$; we further assume that $d\neq k$. This shows in particular that $(\alpha^r\cdot\Theta^{n-r})\neq 0$ for all $r$.

Assume that $n>2$ (the case $n=2$ is trivial). Let $p$ be a prime such that $p^s\mid n!$ with $s\in\Z_{>0}$ maximal and let $t$ be the largest integer such that $p^t\mid d-k$. We wish to prove that $t\geq s$. Assume the contrary; that is, assume that $t<s$. We then have that $p^{(s-t)(r-1)}\mid d+(r-1)k$ for every $r$. Then $p^{(s-t)(n-2)}\mid k$, and therefore $p^{(s-t)(n-2)}\mid d$. In particular, $p^{(s-t)(n-2)}\mid d-k$, and so $(s-t)(n-2)\leq t<s$. But then $s\geq n-1$, and so by Lemma~\ref{13} necessarily $p=2$ and $n$ is a power of $2$. This means that for every odd prime that divides $n!$, the same prime divides $d-k$ with the same or greater power. We have now reduced the proof to showing that the same is also true when $p=2$.

With $p=2$, we have that $s=n-1$, and so $(n-1-t)(n-2)<n-1$. After rearranging, we have that $t>n-1-\frac{n-1}{n-2}$, and so $t\geq n-1=s$.
\end{proof}

We can now state and prove our second main theorem.

\begin{theo}\label{16}
Let $(A,\Theta)$ be a ppav and let $d>0$. Then there is a bijective correspondence between abelian divisors on $A$ of degree $d$ and primitive numerical classes $[\alpha]\in \mbox{NS}(A,\Theta)$ that satisfy $q_r(\alpha)=(-1)^rd^r$ for $r=2,\ldots,n$, given by $Z\mapsto[Z]$.
\end{theo}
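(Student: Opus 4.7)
The plan is to prove well-definedness, injectivity, and surjectivity of $Z\mapsto[Z]$ in turn, drawing on the machinery already developed. Well-definedness is immediate: if $Z$ is an abelian divisor of degree $d$ then Remark~\ref{10} gives $q_r([Z])=(-1)^r d^r$, and Lemma~\ref{14} says $[Z]$ is primitive in $\mbox{NS}(A,\Theta)$. For injectivity, if $Z_1,Z_2$ are abelian divisors of degree $d$ with $[Z_1]=[Z_2]$ in $\mbox{NS}(A,\Theta)$, then $Z_1\equiv Z_2+m\Theta$ in $\mbox{NS}(A)$ for some $m\in\Z$; intersecting with $\Theta^{n-1}$ forces $mn!=0$, so $[Z_1]=[Z_2]$ already in $\mbox{NS}(A)$, and Theorem~\ref{8} yields $Z_1=Z_2$.

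Surjectivity is where the real work lies. Start with a primitive class $[\alpha]\in\mbox{NS}(A,\Theta)$ satisfying $q_r(\alpha)=(-1)^r d^r$ for $r=2,\ldots,n$. Applying Lemma~\ref{15} with $k=d$ to any representative $\alpha'\in\mbox{NS}(A)$ gives $\deg\alpha'\equiv d\pmod{n!}$, so subtracting the appropriate multiple of $\Theta$ produces a lift $\alpha$ with $\deg\alpha=d$. Substituting $\deg\alpha=d$ into the explicit expansion of $q_r$ displayed just before Lemma~\ref{9} and comparing with $q_r(\alpha)=(-1)^r d^r$ leaves
\[
\sum_{m=2}^r \binom{r}{m}(\Theta^n)^{m-2}(-1)^{r-m+1}d^{r-m}(\alpha^m\cdot\Theta^{n-m})=0
\]
for each $r$. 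The $r=2$ case reads $-(\alpha^2\cdot\Theta^{n-2})=0$, and for $r\geq 3$ the lower-order terms vanish by induction, so only the top term $-(\Theta^n)^{r-2}(\alpha^r\cdot\Theta^{n-r})$ survives; hence $(\alpha^r\cdot\Theta^{n-r})=0$ for every $r\geq 2$.

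Once these vanishings are in hand, $\deg\alpha=d>0$ together with $(\alpha^r\cdot\Theta^{n-r})=0$ for $r\geq 2$ lets Proposition~\ref{4} place $\alpha$ in the class of an effective divisor, and Proposition~\ref{3} then produces an abelian divisor $Y$ with $\alpha\equiv mY$ and $m\in\Z_{>0}$. Passing to $\mbox{NS}(A,\Theta)$, one has $[\alpha]=m[Y]$; by Lemma~\ref{14} the class $[Y]$ is primitive, and $[\alpha]$ is primitive by hypothesis, so $m=1$. Thus $\alpha\equiv Y$ and $\deg Y=d$. The main obstacle is the combined use of Lemma~\ref{15} to normalize the lift and the inductive argument to kill every $(\alpha^r\cdot\Theta^{n-r})$; once these are settled the primitivity machinery from the previous section closes the argument cleanly.
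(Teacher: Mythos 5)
Your proposal is correct and follows essentially the same route as the paper: Lemma~\ref{15} to normalize a lift to degree $d$, the expansion of $q_r$ to force $(\alpha^r\cdot\Theta^{n-r})=0$ for all $r\geq 2$, and then the effectivity/primitivity machinery (Propositions~\ref{3}, \ref{4}, Lemma~\ref{14}) to produce the abelian divisor. The only cosmetic differences are that you prove injectivity by comparing degrees rather than by squaring, and you spell out the induction and the final appeal to Propositions~\ref{3} and \ref{4} that the paper compresses into a citation of Theorem~\ref{8}.
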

\begin{proof}
First we will prove injectivity. If $[Z]=[Y]$ in $\mbox{NS}(A,\Theta)$, then $Z\equiv Y+m\Theta$ for some $m\in\Z$.  By squaring and then intersecting with $\Theta^{n-2}$, we get that
\begin{eqnarray}
\nonumber m(2\deg Y+m(\Theta^n))&=&0\\
\nonumber m(-2\deg Z+m(\Theta^n))&=&0
\end{eqnarray}
But this is only possible if $m=0$, since $\deg Z$ and $\deg Y$ are positive.

For surjectivity, first let $[\alpha]\in \mbox{NS}(A,\Theta)$ be a primitive class such that $q_r(\alpha)=(-1)^rd^r$. By Lemma~\ref{15}, we get that $\deg\alpha-d\equiv0\mbox{ (mod }n!)$, and we define
$$\beta:=\alpha-\frac{\deg\alpha-d}{n!}[\Theta].$$
Since $\alpha$ is primitive in $\mbox{NS}(A,\Theta)$, it is trivial to see that $\beta$ is primitive in $\mbox{NS}(A)$. Moreover, $q_r(\beta)=q_r(\alpha)$ and $\deg\beta=d$. This means that $(\beta^r\cdot\Theta^{n-r})=0$ for all $2\leq r\leq n$, and so by Theorem~\ref{8}, $\beta$, and thus $\alpha$, comes from an abelian divisor.
\end{proof}

\begin{coro}
The map that takes an elliptic subgroup $E$ on $A$ to the numerical class of its abelian complement induces a bijection between elliptic subgroups of degree $d$ and primitive numerical classes $[\alpha]\in\mbox{NS}(A,\Theta)$ that satisfy $q_r(\alpha)=(-1)^r(n-1)!d^r$ for $r=2,\ldots,n$. 
\end{coro}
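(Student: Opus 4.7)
The plan is to deduce this corollary as a direct consequence of Theorem~\ref{16}, combined with the degree formula of Lemma~\ref{12} and the Birkenhake--Lange bijection between an abelian subvariety and its abelian complement that is recalled in the introduction.

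First, by the canonical construction of the abelian complement, the assignment sending an abelian subvariety $E\subseteq A$ to its abelian complement $Z\subseteq A$ gives a bijection between abelian subvarieties of $A$ of dimension $r$ and those of codimension $r$. In the case $r=1$ this specializes to a bijection between elliptic subgroups of $A$ and abelian divisors of $A$.

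Second, I invoke Lemma~\ref{12}: if $Z$ is the abelian complement of the elliptic subgroup $E$, then
$$\deg Z = (n-1)!(E\cdot\Theta) = (n-1)!\deg E.$$
Thus $E$ has degree $d$ if and only if its abelian complement $Z$ has degree $(n-1)!d$, and the bijection above restricts to a bijection between elliptic subgroups of degree $d$ and abelian divisors of degree $(n-1)!d$.

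Finally, apply Theorem~\ref{16} with $(n-1)!d$ in place of $d$. This produces a bijection between abelian divisors of degree $(n-1)!d$ and primitive classes $[\alpha]\in\mbox{NS}(A,\Theta)$ satisfying $q_r(\alpha)=(-1)^r((n-1)!d)^r$ for $r=2,\ldots,n$. Composing this with $E\mapsto Z$ yields the stated correspondence. There is essentially no obstacle to overcome: the result is a clean composition of bijections already proved, and the only routine verification is that the scaling $d\mapsto (n-1)!d$ interacts with the degree-$r$ homogeneity of $q_r$ exactly as the hypothesis requires.
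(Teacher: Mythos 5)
Your proof is correct and is exactly the paper's (one-line) argument: compose the Birkenhake--Lange complement bijection with Lemma~\ref{12} and Theorem~\ref{16} applied to abelian divisors of degree $(n-1)!\,d$. Note that your computation yields $q_r(\alpha)=(-1)^r\bigl((n-1)!\,d\bigr)^r$, which agrees with Proposition~\ref{25} and indicates that in the corollary's statement the factor $(n-1)!$ should carry the exponent $r$.
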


\begin{proof}
This follows from the previous theorem and from Lemma~\ref{12}. 
\end{proof}

It may seem that the forms $q_r$ for $r\geq3$ are extraneous, especially since Kani's characterization of elliptic curves on an abelian surface is by means of only one quadratic form. However, all the forms $q_r$ are needed for this characterization. For example, if $A=E_1\times E_2\times E_3$ for elliptic curves $E_i$ and $\Theta$ is the product polarization, let
$$D_1:=\{0\}\times E_2\times E_3$$
$$D_2:= E_1\times \{0\}\times E_3$$
$$D_3:= E_1\times E_2\times\{0\}.$$
For $k\in\Z_{>0}$, let $\alpha_k:=-kD_1+k(k+1)D_2+(k+1)D_3$. We have that $\deg\alpha_k=2(k^2+k+1)$, $(\alpha_k^2\cdot\Theta)=0$ and $(\alpha_k^3)=-k^2(k+1)^2$. We see that $\alpha_k$ is primitive and $q_2(\alpha_k)$ is a square, but $\alpha_k$ does not come from an abelian divisor (since $(\alpha_k^3)\neq0$). This shows that the form $q_3$ is indispensable here. Similar examples can be found in higher dimension.

In the case that $\Theta$ is not a principal polarization we cannot be as explicit as in Theorem~\ref{16}, but something can be said. One of the main obstructions to obtaining a similar theorem in the non-principally polarized case is the fact that abelian divisors are not necessarily primitive in $\mbox{NS}(A,\Theta)$. Nonetheless, we can still use the forms $q_r$ to find abelian divisors.

\begin{prop}
Let $(A,\Theta)$ be a polarized abelian variety. If $[\alpha]\in\mbox{NS}(A,\Theta)$ and $d\in\Z_{>0}$ such that $\deg\alpha\equiv d\mbox{ (mod }(\Theta^n))$ and $q_r(\alpha)=(-1)^rd^r$ for $r\leq n$, then $\alpha=m[Z]$ for some abelian divisor $Z$ on $A$ and some $m\in\Z$.
\end{prop}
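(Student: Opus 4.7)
The plan is to parallel the proof of Theorem~\ref{16}, with the hypothesis $\deg\alpha\equiv d\pmod{(\Theta^n)}$ playing the role that Lemma~\ref{15} played in the ppav case (it is no longer automatic, since $(\Theta^n)$ need not equal $n!$), and with primitivity simply dropped (so the output is allowed to be a proper multiple of an abelian class). Concretely, the modular hypothesis is exactly what guarantees that
$$\beta:=\alpha-\frac{\deg\alpha-d}{(\Theta^n)}[\Theta]$$
is integral; this $\beta$ has $\deg\beta=d$ and represents $[\alpha]$ in $\mbox{NS}(A,\Theta)$, and since the $q_r$ descend to $\mbox{NS}(A,\Theta)$, also $q_r(\beta)=(-1)^rd^r$ for $r=2,\ldots,n$.

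Next I would extract the vanishings $(\beta^r\cdot\Theta^{n-r})=0$ for $2\leq r\leq n$ from these $q_r$ values. Substituting $\deg\beta=d$ into the expansion of $q_r$ recorded just before Lemma~\ref{9}, the condition $q_r(\beta)=(-1)^rd^r$ becomes
$$\sum_{m=2}^r\binom{r}{m}(\Theta^n)^{m-2}(-1)^{r-m+1}d^{r-m}(\beta^m\cdot\Theta^{n-m})=0,$$
and an easy induction on $r$ peels off $(\beta^r\cdot\Theta^{n-r})$ from the $m=r$ summand (the only one in which it appears, with the nonzero coefficient $-(\Theta^n)^{r-2}$).

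Combined with $(\beta\cdot\Theta^{n-1})=d>0$, these vanishings give $(\beta^i\cdot\Theta^{n-i})\geq 0$ for $1\leq i\leq n$, so Proposition~\ref{4} produces an effective divisor $Z$ with $Z\equiv\beta$. Since $(Z^2\cdot\Theta^{n-2})=0$, Proposition~\ref{3} yields $Z\equiv mY$ for some abelian divisor $Y$ and some $m\in\Z_{>0}$ (the sign being forced by $\deg Z=d>0$). Unraveling the normalization then gives $[\alpha]=m[Y]$ in $\mbox{NS}(A,\Theta)$, which is the conclusion. I do not expect a real obstacle: the argument of Theorem~\ref{16} only invokes primitivity at the very end, to rule out the possibility $m>1$, and dropping that step is precisely what the weaker conclusion here allows.
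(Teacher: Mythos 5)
Your proposal is correct and follows essentially the same route as the paper: the same normalization $\beta=\alpha-\frac{\deg\alpha-d}{(\Theta^n)}[\Theta]$, the same extraction of the vanishings $(\beta^r\cdot\Theta^{n-r})=0$ from the values of $q_r$, and the same appeal to Proposition~\ref{4} and then Proposition~\ref{3} to realize $\beta$ as a multiple of an abelian divisor. The only difference is that you spell out the inductive peeling-off of $(\beta^r\cdot\Theta^{n-r})$, which the paper leaves implicit.
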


\begin{proof}
Take $\beta=\alpha-\frac{\deg\alpha-d}{(\Theta^n)}[\Theta]\in\mbox{NS}(A)$. We see that $\deg\beta=d$ and $q_r(\beta)=(-1)^rd^r$ for $r\leq n$, and so $(\beta^r\cdot\Theta^{n-r})=0$ for $r\geq 2$. Since $\deg\beta>0$, Proposition~\ref{4} says that $\beta$ is effective and Proposition~\ref{3} says that $\beta$ is algebraically equivalent to a multiple of an abelian divisor.
\end{proof}

Let $x=(d_2,\ldots,d_n)\in\Z^{n-1}$ be a vector. We say that a polarized abelian variety $(A,\Theta)$ \emph{represents} $x$ if there exists a class $\alpha\in\mbox{NS}(A,\Theta)$) such that $q_r(\alpha)=d_r$. We say that it $\emph{primitively represents}$ the same vector if there is a primitive class that satisfies the same equation.

\begin{prop}\label{17}
Let $(A,\Theta)$ be a ppav of dimension $n$. Then $(A,\Theta)$ is isomorphic to a product abelian variety $(E\times Y,\mbox{pr}_1^*(0)+\mbox{pr}_2^*\Theta_2)$ for $E$ an elliptic curve and $(Y,\Theta_2)$ an $n-1$ dimensional polarized abelian variety if and only if $(A,\Theta)$ represents the vector $((-1)^r(n-1)!^r)_{r=2}^n$. This is equivalent to the existence of an elliptic curve $E$ in $A$ with $(\Theta\cdot E)=1$.
 \end{prop}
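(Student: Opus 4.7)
My plan is to prove the three-way equivalence in the proposition via the cycle (product decomposition) $\Rightarrow$ (elliptic curve $E'$ with $(\Theta\cdot E')=1$) $\Rightarrow$ (representation of the vector) $\Rightarrow$ (elliptic curve with $(\Theta\cdot E')=1$) $\Rightarrow$ (product decomposition). The first arrow is a direct intersection computation: on $E\times Y$ the curve $E\times\{0\}$ meets $\mbox{pr}_1^*(0)=\{0\}\times Y$ in a single transverse point and, by the projection formula applied to $\mbox{pr}_2$, has zero intersection with $\mbox{pr}_2^*\Theta_2$, so $(E\times\{0\}\cdot\Theta)=1$. For the step from an elliptic curve $E$ of $\Theta$-degree $1$ to representation, its abelian complement $Z$ is an abelian divisor of degree $(n-1)!$ by Lemma~\ref{12}, and since $[Z]^2=0$, Remark~\ref{10} furnishes $q_r([Z])=(-1)^r(n-1)!^r$ for all $r$.

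The main computational step is the converse: from representation back to a degree-$1$ elliptic curve. Let $\alpha\in\mbox{NS}(A,\Theta)$ satisfy $q_r(\alpha)=(-1)^r(n-1)!^r$. Lemma~\ref{15} with $k=(n-1)!$ gives $\deg\alpha\equiv(n-1)!\pmod{n!}$, and since the $q_r$ descend to $\mbox{NS}(A,\Theta)$ I may shift $\alpha$ by an integer multiple of $[\Theta]$ to assume $\deg\alpha=(n-1)!$ outright. A short induction on $r$ using the explicit expansion of $q_r$ given right after its definition, combined with $q_r(\alpha)=(-1)^r(\deg\alpha)^r$, then forces $(\alpha^r\cdot\Theta^{n-r})=0$ for every $2\le r\le n$; in particular $\alpha^2=0$. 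Proposition~\ref{4} makes $\alpha$ numerically equivalent to an effective divisor, and Corollary~\ref{5} writes $\alpha\equiv m[Z]$ for some abelian divisor $Z$ and positive integer $m$. Matching degrees via Lemma~\ref{12} gives $(n-1)!=\deg\alpha=m\cdot(n-1)!\cdot(\Theta\cdot E')$ with $E'$ the elliptic complement of $Z$, which forces $m=1$ and $(\Theta\cdot E')=1$.

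The delicate implication, that a degree-$1$ elliptic curve $E$ forces the product decomposition, is the main obstacle. Let $Z$ be the abelian complement of $E$ and $\mu:E\times Z\to A$ the addition isogeny. Since $\chi(\Theta|_E)=1$, the finite group scheme $E\cap Z=K(\Theta|_E)$ has length $\chi(\Theta|_E)^2=1$, so $\mu$ is a bijective isogeny and hence an isomorphism of abelian varieties. To split the polarization, I would write the isogeny $\phi_\Theta:A\to A^\vee$ as a $2\times 2$ block matrix under $A\cong E\times Z$ and $A^\vee\cong E^\vee\times Z^\vee$; the defining characterization of $Z$ as the connected component of $\ker(\iota_E^\vee\circ\phi_\Theta)$ kills one off-diagonal block, and Rosati symmetry $\phi_\Theta^\vee=\phi_\Theta$ kills its transpose. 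Block diagonality of $\phi_\Theta$ together with $\phi_{\Theta|_E}:E\to E^\vee$ being an isomorphism yields $\mu^*\Theta\equiv\mbox{pr}_1^*(\Theta|_E)+\mbox{pr}_2^*(\Theta|_Z)$, and after translating on $E$ the class $\Theta|_E$ is represented by the origin. The hard part will be cleanly upgrading block-diagonality of the polarization isogeny to the stated decomposition at the level of polarized abelian varieties, which may require the Appell-Humbert description of line bundles on products.
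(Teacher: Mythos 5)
Your proof is correct, and its overall skeleton (the cycle of implications through the degree-one elliptic curve) matches the paper's, but two key steps are carried out by genuinely different means. First, to pass from a class $\alpha$ representing the vector to an abelian divisor, the paper first proves that any such $\alpha$ is automatically primitive in $\mbox{NS}(A,\Theta)$ (if $\alpha=m\beta$ then $q_2(\beta)=((n-1)!/m)^2$ forces $m\mid(n-1)!$, and Theorem~\ref{16} would produce an abelian divisor of degree $(n-1)!/m<(n-1)!$, contradicting Lemma~\ref{12}); you bypass primitivity entirely by normalizing $\deg\alpha=(n-1)!$ via Lemma~\ref{15}, invoking Corollary~\ref{5} to write $\alpha\equiv m[Z]$, and extracting $m=1$ from the same degree constraint of Lemma~\ref{12}. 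Both arguments rest on the same arithmetic fact; yours has the small advantage of not routing through Theorem~\ref{16}, though note that Corollary~\ref{5} requires $\alpha^2=0$ in $\frak{A}^*(A)$ and not merely $(\alpha^2\cdot\Theta^{n-2})=0$, so you should apply Proposition~\ref{4} to get effectivity \emph{before} concluding $\alpha^2=0$ from Proposition~\ref{3}. Second, for the splitting of the polarization the paper avoids your block-matrix analysis of $\phi_\Theta$ altogether: it computes $((\Theta-Z)^r\cdot\Theta^{n-r})=(n-r)(n-1)!\geq0$, so $\Theta-Z\equiv D$ with $D$ effective by Proposition~\ref{4}; then $(D\cdot E)=0$ yields both $(Z\cdot E)=1$ (hence the addition map is an isomorphism) and, via the Seesaw theorem applied to $D$ on $E\times Z$, the identification $\mathcal{O}(D)\simeq\mbox{pr}_2^*\mathcal{O}(\Theta_2)$, giving $\Theta\equiv\mbox{pr}_1^*(0)+\mbox{pr}_2^*\Theta_2$. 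Your alternative (killing the off-diagonal blocks using the definition of the complement and Rosati symmetry) does work, and the ``hard part'' you flag is not actually hard: the proposition only asserts the decomposition up to algebraic equivalence, and two line bundles inducing the same map $A\to A^\vee$ differ by an element of $\mbox{Pic}^0(A)$, so block-diagonality already gives $\mu^*\Theta\equiv\mbox{pr}_1^*(\Theta|_E)+\mbox{pr}_2^*(\Theta|_Z)$. In particular, do not reach for the Appell--Humbert description, which would needlessly confine the argument to $\C$ while the paper works over an arbitrary algebraically closed field.
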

\begin{proof} If $A$ splits in the way that is stated, then $(\Theta\cdot (E\times\{0\}))=1$, and so Lemma~\ref{12} says that the abelian complement of $E\times\{0\}$ has degree $(n-1)!$.

For the other direction, we will first show that if $q_r(\alpha)$ satisfies the equation above, then $\alpha$ must be primitive. If $\alpha=m\beta$ for some primitive $\beta$ (we can assume $m$ positive), then $q_2(\beta)=((n-1)!/m)^2$, and so $m\mid(n-1)!$. But then $q_r(\beta)=(-1)^r((n-1)!/m)^r$, and so by Theorem~\ref{16} there exists an abelian divisor $Y$ on $A$ with $\deg Y=(n-1)!/m$. If $m>1$, this contradicts Lemma~\ref{12}.

Now assume that $q_r(\alpha)=((-1)^r((n-1)!)^r)_{r=2}^n$. Since $\alpha$ is primitive, there exists an abelian divisor $Z$ of degree $(n-1)!$ such that $[Z]=[\alpha]$ in $\mbox{NS}(A,\Theta)$. By Lemma~\ref{12}, the abelian complement of $Z$ is an elliptic curve $E$ with $(\Theta\cdot E)=1$. Now
$$((\Theta-Z)^r\cdot\Theta^{n-r})=(n-r)(n-1)!\geq0,$$
and by Proposition~\ref{4} $\Theta-Z\equiv D$ for some effective divisor $D$. This implies that $((\Theta-Z)\cdot E)\geq0$, and so $1-(Z\cdot E)\geq0$. But then $E$ intersects $Z$ in only one point, and so the addition map $E\times Z\to A$ is an isomorphism of varieties.

Since $(D\cdot E)=0$, if we see $D$ as a divisor on $E\times Z$, this means that $\mathcal{O}_{E\times Z}(D)|_{E\times\{z\}}$ is trivial for every $z\in Z$. By the Seesaw Theorem, we get that $\mathcal{O}_{E\times Z}(D)\simeq\mbox{pr}_2^*\mathcal{O}_{E\times Z}(\Theta_2)$ for some divisor $\Theta_2$ on $Z$. Summing everything up, we get that $\Theta$, seen as a divisor on the product, is numerically equivalent to $\mbox{pr}_1^*(0)+\mbox{pr}_2^*(\Theta_2)$.
\end{proof}

As a corollary, we obtain a nice geometric result in dimension 3.

\begin{coro}\label{18}
A ppav $(A,\Theta)\in\mathcal{A}_3$ is not the Jacobian of a curve if and only if $(q_2,q_3)$ represents $(4,-8)$.
\end{coro}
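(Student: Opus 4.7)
The plan is to reduce directly to Proposition 17 and then appeal to the classical classification of Jacobians in low dimension.

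First, I would observe that when $n=3$ we have $(n-1)! = 2$, so the vector $((-1)^r(n-1)!^r)_{r=2}^n$ specializes to $((-1)^2\cdot 2^2,\,(-1)^3\cdot 2^3) = (4,-8)$. Proposition~\ref{17} therefore says that $(A,\Theta)$ represents $(4,-8)$ if and only if $(A,\Theta)\cong (E\times Y,\,\mathrm{pr}_1^*(0)+\mathrm{pr}_2^*\Theta_2)$ for some elliptic curve $E$ and some polarized abelian surface $(Y,\Theta_2)$. Because $\Theta$ is a principal polarization and a product polarization is principal exactly when both factors are, $\Theta_2$ is forced to be a principal polarization on $Y$. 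Hence representing $(4,-8)$ is equivalent to $(A,\Theta)$ being isomorphic, as a ppav, to a product of a $1$-dimensional ppav and a $2$-dimensional ppav — i.e.\ to $(A,\Theta)$ being decomposable as a ppav.

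Next I would invoke the classical fact that for $g\leq 3$ a ppav of dimension $g$ is the Jacobian of a smooth projective curve if and only if it is indecomposable as a ppav. The easy direction uses that the theta divisor of the Jacobian of a smooth irreducible curve is irreducible, whereas the theta divisor $\mathrm{pr}_1^*(0)+\mathrm{pr}_2^*\Theta_2$ of a non-trivial product ppav is visibly reducible. The converse — that every indecomposable ppav of dimension $3$ is a Jacobian — is the Hoyt/Oort--Ueno theorem and is what makes the statement special to $\mathcal{A}_3$.

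Combining the two steps: $(A,\Theta)\in\mathcal{A}_3$ fails to be a Jacobian of a smooth curve if and only if it is decomposable as a ppav, and in dimension $3$ a non-trivial decomposition of a ppav necessarily has an elliptic-curve factor, putting $(A,\Theta)$ in the product form appearing in Proposition~\ref{17}. By the first paragraph, this is equivalent to $(q_2,q_3)$ representing $(4,-8)$, which gives the claim.

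The only non-routine ingredient is the Hoyt/Oort--Ueno classification of indecomposable $3$-dimensional ppavs; granted that, the corollary is an immediate translation of Proposition~\ref{17} with $n=3$.
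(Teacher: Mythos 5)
Your proof is correct and takes essentially the same route as the paper: both reduce to Proposition~\ref{17} with $n=3$, where the vector $((-1)^r(n-1)!^r)_{r=2}^n$ becomes $(4,-8)$, and then invoke the classical fact that a three-dimensional ppav is the Jacobian of a curve if and only if it is indecomposable. You simply supply more of the details the paper leaves implicit (the Hoyt/Oort--Ueno attribution, and the observation that any nontrivial splitting $3=1+2$ forces an elliptic factor, so decomposability is exactly the product form of Proposition~\ref{17}).
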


\begin{proof}
It is known that a principally polarized abelian 3-fold $(A,\Theta)$ is the Jacobian of some curve if and only if it is indecomposable. Therefore, by Proposition~\ref{17}, $(A,\Theta)$ is not the Jacobian of a curve if and only if it represents $(4,-8)$.
\end{proof}

Assume now that $(JC,\Theta_C)$ is the Jacobian of a curve $C$. A \emph{minimal elliptic cover} is a finite morphism $f:C\to E$ to an elliptic curve $E$ that does not factor through any other elliptic curve non-trivially. Two covers $f:C\to E$ and $f':C\to E'$ are \emph{isomorphic} if there is an isomorphism $\phi:E\to E'$ such that $\phi\circ f=f'$. Kani \cite{Kani} gives the following classification:

\begin{prop}\label{19}
The map $f\mapsto f^*E$ gives a 1-1 correspondence between the set of isomorphism classes of minimal elliptic covers $f:C\to E$ of degree $k$ and elliptic subgroups $E\leq JC$ with $(E\cdot\Theta_C)=k$.
\end{prop}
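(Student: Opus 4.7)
The plan is to construct an explicit inverse to the stated map and verify the degree formula, reducing everything to Albanese/Picard functoriality combined with the theory of abelian complements.

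Given a minimal cover $f: C \to E$, pullback of line bundles gives a group homomorphism $f^*: E \simeq \mbox{Pic}^0(E) \to \mbox{Pic}^0(C) = JC$. Minimality of $f$ is equivalent to injectivity of $f^*$ (equivalently, to connectedness of $\ker f_*: JC \to E$, since a non-trivial $\pi_0(\ker f_*)$ would let $f$ factor through the elliptic curve $JC/(\ker f_*)_0$), so $f^*E$ is an elliptic subgroup of $JC$. The degree formula $(f^*E \cdot \Theta_C) = \deg f$ follows from the identity $f_* \circ f^* = [\deg f]$ on $E$: under the principal polarizations of $E$ and $JC$, this identifies the pullback of $\Theta_C$ along the map $f^*: E \to JC$ with a divisor of degree $\deg f$ on $E$, which by injectivity of $f^*$ coincides with the restriction of $\Theta_C$ to $f^*E$.

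For the inverse construction, given an elliptic subgroup $E \leq JC$ with $(E \cdot \Theta_C) = k$, let $Y$ denote its abelian complement in the sense of \cite{BL2}, and let $\pi: JC \to E_f := JC/Y$ be the quotient elliptic curve. Composing $\pi$ with an Abel-Jacobi map $\alpha_P: C \to JC$ yields a non-constant cover $f: C \to E_f$; since $f_* = \pi$ has connected kernel $Y$, the cover is minimal by the criterion above. The dual $\pi^\vee$, interpreted through the principal polarizations, becomes the inclusion into $JC$ of the abelian complement of $Y$, which is $E$ itself, so $f^*E_f = E$ and $\deg f = k$ by the previous paragraph.

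Mutual inverseness is now formal: starting from $E$ we recover it as $f^*E_f$ by construction, and starting from a minimal cover $f$, the complement of $f^*E$ equals $(\ker f_*)_0 = \ker f_*$, so the inverse construction returns the Albanese factorization of $f$, canonically isomorphic to $f$. The main subtlety is matching the target elliptic curves up to a unique isomorphism rather than merely up to isogeny, which is precisely where minimality enters; this is the same bookkeeping Kani carries out in dimension $2$ in \cite{Kani}, and it extends verbatim given the existence and properties of abelian complements in arbitrary dimension from \cite{BL2}.
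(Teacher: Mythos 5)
The paper does not actually prove this proposition: it is quoted from Kani \cite{Kani} ("Kani gives the following classification"), so there is no internal argument to compare yours against. Your proof is, in substance, a correct reconstruction of Kani's argument, transported from genus $2$ to arbitrary genus: minimality of $f$ is identified with connectedness of $\ker f_*$ (equivalently, injectivity of $f^*$), the degree formula follows from $f_*\circ f^*=[\deg f]$ together with the fact that $f^*$ and $f_*$ are dual to one another under the canonical principal polarizations of $\mbox{Pic}^0(E)$ and $JC$, and the inverse map is the Albanese factorization of $C\to JC\to JC/Y$ with $Y$ the Birkenhake--Lange complement of $E$; that the complement of the complement returns the original subvariety is exactly what makes the two constructions mutually inverse. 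Two caveats are worth recording. First, the equivalence of minimality with injectivity of $f^*$ uses that every isogeny of elliptic curves of degree $>1$ has a non-trivial \emph{\'etale} kernel; this is automatic in characteristic $0$ (the setting of \cite{Kani}), but fails for inseparable isogenies in positive characteristic, so over an arbitrary algebraically closed field --- the standing hypothesis elsewhere in the paper --- this step needs either a separability assumption or an extra argument that an inseparable factorization still obstructs $f^*$ from being a closed immersion (which is what the degree computation actually requires). Second, the reconstruction $\pi\circ\alpha_P$ recovers $f$ only up to a translation of the target coming from the choice of base point $P$; this is harmless precisely because the paper's notion of isomorphism of covers allows arbitrary isomorphisms $\phi:E\to E'$ of the target curves rather than only group isomorphisms, and you flag this point correctly.
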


Translating this to our language, we get:

\begin{prop}\label{25}
Let $C$ be a curve of genus $g$, and let $JC$ be its Jacobian. Then there is a bijective correspondence between the following sets:
\begin{enumerate}
\item Isomorphism classes of minimal elliptic covers $C\to E$ of degree $k$.
\item Elliptic subgroups $E\leq JC$ such that $(E\cdot\Theta_C)=k$.
\item Primitive elements $\alpha\in\mbox{NS}(JC,\Theta_C)$ such that $q_r(\alpha)=(-1)^r(g-1)!^rk^r$ for $r=2,\ldots,g$.
\end{enumerate}
\end{prop}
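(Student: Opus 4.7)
The plan is to obtain the three-way correspondence by chaining together two bijections that have already been established in the paper, so the proof is essentially a transitivity argument. Set $A = JC$ and $n = g$, and apply all previous results with the ppav $(JC, \Theta_C)$.

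The equivalence between sets (1) and (2) is exactly Proposition~\ref{19}: the pullback map $f \mapsto f^{\ast}E$ gives a bijection between isomorphism classes of minimal elliptic covers $C \to E$ of degree $k$ and elliptic subgroups $E \leq JC$ with $(E \cdot \Theta_C) = k$. Nothing needs to be proved here; I would simply quote this.

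For the equivalence between (2) and (3), I would invoke the corollary to Theorem~\ref{16} applied to $(JC,\Theta_C)$. That corollary associates to an elliptic subgroup $E \leq JC$ the numerical class in $\mbox{NS}(JC, \Theta_C)$ of its abelian complement $Z$. By Lemma~\ref{12}, if $(E \cdot \Theta_C) = k$ then the abelian divisor $Z$ has degree $\deg Z = (g-1)! \, k$. By Theorem~\ref{16} together with Remark~\ref{10}, the class $[Z]$ is then a primitive element of $\mbox{NS}(JC,\Theta_C)$ satisfying
\[
q_r([Z]) \;=\; (-1)^r (\deg Z)^r \;=\; (-1)^r (g-1)!^{\,r}\, k^r
\quad\text{for } r = 2,\ldots,g,
\]
and conversely every primitive $[\alpha]$ with these $q_r$-values comes via Theorem~\ref{16} from a unique abelian divisor $Z$ of degree $(g-1)!\,k$, whose abelian complement is (again by Lemma~\ref{12}) an elliptic subgroup with $(E\cdot\Theta_C) = k$.

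Composing the bijections (1) $\leftrightarrow$ (2) $\leftrightarrow$ (3) yields the statement. There is no real obstacle: the only thing to watch is the degree bookkeeping, namely that Lemma~\ref{12} converts ``degree $k$ of the elliptic subgroup'' into ``degree $(g-1)!\,k$ of its abelian complement,'' which is then raised to the $r$-th power by the formula of Remark~\ref{10}, accounting for the exponent on $(g-1)!$ in condition (3).
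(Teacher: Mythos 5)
Your proposal is correct and matches the paper's (essentially unstated) argument exactly: the paper derives this proposition by simply combining Kani's Proposition~\ref{19} for the equivalence of (1) and (2) with the corollary to Theorem~\ref{16} (itself a consequence of Lemma~\ref{12}, Remark~\ref{10} and Theorem~\ref{16}) for the equivalence of (2) and (3). Your degree bookkeeping, $\deg Z = (g-1)!\,k$ raised to the $r$-th power, is the right reading (the paper's corollary actually has a typo, writing $(n-1)!d^r$ where $(n-1)!^r d^r$ is meant, as the statement of this proposition confirms).
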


\begin{coro}\label{20}
A 3-dimensional ppav $(A,\Theta)$ is the Jacobian of a genus 3 curve and splits isogenously as the product of elliptic curves if and only if $(q_2,q_3)$ does not represent $(4,-8)$ but there exist two distinct primitive elements in $\mbox{NS}(A,\Theta)$ that represent vectors of the form $(d^2,-d^3)$ for $d>2$.
\end{coro}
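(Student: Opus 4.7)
The plan is to combine Corollary~\ref{18}, Proposition~\ref{17}, and Proposition~\ref{25} with a simple dimension-three observation about elliptic subgroups. By Corollary~\ref{18}, the condition ``$(A,\Theta)$ is the Jacobian of a genus $3$ curve'' is exactly ``$(q_2,q_3)$ does not represent $(4,-8)$''. Granted that $(A,\Theta)$ is a Jacobian, what remains is to prove that it splits isogenously as a product of elliptic curves if and only if $\mbox{NS}(A,\Theta)$ contains two distinct primitive classes representing vectors of the form $(d^2,-d^3)$ with $d>2$.

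The first key step I would establish is that a three-dimensional ppav $A$ splits isogenously as a product of three elliptic curves if and only if it contains at least two distinct elliptic subgroups. The ``only if'' direction is immediate. For the converse, given distinct elliptic subgroups $E_1,E_2\subseteq A$, the sum $E_1+E_2$ is an abelian subvariety of dimension $2$ (it cannot be $1$-dimensional since then $E_1=E_2$), and the addition morphism $E_1\times E_2\to E_1+E_2$ is a surjective homomorphism of abelian varieties of equal dimension, hence an isogeny. Letting $E_3$ be the abelian complement of $E_1+E_2$ in $A$, one obtains $A\sim E_1\times E_2\times E_3$.

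The second step is to apply Proposition~\ref{25} with $g=3$: elliptic subgroups $E\subseteq A$ of degree $k$ correspond bijectively to primitive classes $\alpha\in\mbox{NS}(A,\Theta)$ with $q_r(\alpha)=(-1)^r 2^r k^r$ for $r=2,3$, i.e.\ representing the vector $(d^2,-d^3)$ with $d=2k$. Distinct elliptic subgroups yield distinct classes by the bijection. Now Proposition~\ref{17} gives that $(A,\Theta)$ being a product is equivalent to the existence of an elliptic subgroup with $(E\cdot\Theta)=1$, that is, $d=2$; under the Jacobian hypothesis this is excluded, so every elliptic subgroup has $d\geq 4>2$. Conversely, Theorem~\ref{16} combined with Lemma~\ref{12} ensures that any primitive class representing $(d^2,-d^3)$ with $d>2$ arises from an abelian divisor whose complement is an elliptic curve, with $d=2(E\cdot\Theta)$ automatically even.

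Assembling these ingredients, the forward direction follows because a Jacobian splitting as $E_1\times E_2\times E_3$ contains two distinct elliptic subgroups, each producing a primitive class with $d>2$. The reverse direction extracts two distinct elliptic subgroups from the two primitive classes and then invokes the dimension-three observation to produce the isogeny splitting. I do not expect any serious obstacle here: all the real content has already been packaged into Theorem~\ref{16}, Proposition~\ref{17}, and Proposition~\ref{25}. The main care required is in the arithmetic bookkeeping, namely tracking the factor $(g-1)!=2$ so that the $d$ in the statement matches $2(E\cdot\Theta)$, and verifying that the threshold $d>2$ is exactly what rules out the product (non-Jacobian) case.
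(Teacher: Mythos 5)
Your argument is correct and is essentially the derivation the paper intends (it gives no separate proof of this corollary): Corollary~\ref{18} handles the Jacobian condition, Proposition~\ref{25} (equivalently Theorem~\ref{16} plus Lemma~\ref{12}) converts primitive classes with $d>2$ into elliptic subgroups and back, Proposition~\ref{17} shows $d=2$ is excluded exactly by the non-representability of $(4,-8)$, and your observation that two distinct elliptic subgroups of a threefold force an isogeny splitting $A\sim E_1\times E_2\times E_3$ is the correct remaining geometric step. The bookkeeping $d=2(E\cdot\Theta)$ is handled properly, so I see no gap.
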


\section{Analytic calculations on $\mathcal{A}_n$}
Throughout this section, $(A,\Theta)$ will be a ppav over $\C$. Since we are working over the field of complex numbers, we can assume that $A=\C^n/\Lambda$, where $\Lambda$ is a rank $2n$ lattice in $\C^n$. Furthermore, we may assume that $\Lambda=(\tau\hspace{0.1cm}I)\Z^g$, where $\tau\in\frak{H}_n:=\{N\in M_n(\C):N=N^t,\mbox{Im}N>0\}$. We have a map
$$c_1:\mbox{Pic}(A)\simeq H^1(A,\mathcal{O}_A^\times)\to H^2(A,\Z)$$
that assigns to each $L\in\mbox{Pic}(A)$ its first Chern class $c_1(L)$. The first Chern class can be seen as a Hermitian form on $\C^n$ whose imaginary part takes on integer values on $\Lambda$. Actually, the image of $c_1$ is isomorphic to $\mbox{NS}(A)$, and so in this section we will write numerical classes as integral differential forms. It can be shown that $\mbox{NS}(A)=H^2(A,\Z)\cap H^{1,1}(A,\Z)$, and this means that an integral cohomology class $\omega$ is in $\mbox{NS}(A)$ if and only if $\omega\wedge dz_1\wedge\cdots\wedge dz_n=0$. Intersection of line bundles can be shown to be equal to
$$(L_1\cdots L_n)=\int_Ac_1(L_1)\wedge\cdots\wedge c_1(L_n).$$
Choose a symplectic basis $\{\lambda_1,\ldots,\lambda_{2n}\}$ for $\Lambda$, and take $x_i$ to be the real coordinate function of $\lambda_i$. By Lemma 3.6.4 of \cite{BL}, we have that
$$c_1(\Theta)=-\sum_{i=1}^ndx_i\wedge dx_{i+n}.$$
Given an integral form $\omega\in \mbox{NS}(A)$, we wish to find the intersection number $(\omega^r\cdot c_1(\Theta)^{n-r})$. Using the explicit basis we just wrote, it is easy to see that
$$c_1(\Theta)^{\wedge s}=(-1)^ss!\sum_{1\leq i_1<\cdots<i_s\leq n}dx_{i_1}\wedge dx_{i_1+n}\wedge\cdots\wedge dx_{i_s}\wedge dx_{i_s+n}.$$
In particular, for $s=n$, we get that
$$n!=(c_1(\Theta)^n)=\int_Ac_1(\Theta)^{\wedge n}=(-1)^nn!\int_Adx_{1}\wedge dx_{n+1}\wedge\cdots\wedge dx_{n}\wedge dx_{2n}.$$
If we put $\eta:=dx_{1}\wedge dx_{n+1}\wedge\cdots\wedge dx_{n}\wedge dx_{2n}$, this implies that when calculating intersection numbers, we have
$$(L_1\cdots L_n)=(-1)^n\cdot\mbox{coefficient of }\eta\mbox{ in }c_1(L_1)\wedge\cdots\wedge c_1(L_n).$$
From now on, let 
$$\omega=\sum_{i<j}a_{ij}dx_i\wedge dx_j\in\mbox{NS}(A)$$
be a differential form.

\begin{lemma}\label{22}
The degree of $\omega$ is $-(n-1)!(a_{1,n+1}+a_{2,n+2}+\cdots+a_{n,2n})$.
\end{lemma}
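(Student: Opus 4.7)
The plan is to unpack the definition $\deg\omega=(\omega\cdot\Theta^{n-1})$ and apply the coefficient-of-$\eta$ formula that the author has just established, namely
$$(L_1\cdots L_n)=(-1)^n\cdot\text{coefficient of }\eta\text{ in }c_1(L_1)\wedge\cdots\wedge c_1(L_n).$$
So I only need to identify which terms of $\omega\wedge c_1(\Theta)^{\wedge(n-1)}$ contribute a multiple of $\eta=dx_1\wedge dx_{n+1}\wedge\cdots\wedge dx_n\wedge dx_{2n}$.

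First I would plug in the explicit formula already derived in the excerpt,
$$c_1(\Theta)^{\wedge(n-1)}=(-1)^{n-1}(n-1)!\sum_{k_0=1}^{n}\bigwedge_{\substack{i=1\\ i\neq k_0}}^{n}(dx_i\wedge dx_{i+n}),$$
so the summand indexed by $k_0$ is missing exactly the pair $dx_{k_0}\wedge dx_{k_0+n}$. When I wedge this summand with $\omega=\sum_{i<j}a_{ij}dx_i\wedge dx_j$, the only way to avoid a repeated $dx_\ell$ (which would kill the term) is to pick the unique pair $(i,j)=(k_0,k_0+n)$ from $\omega$, contributing the coefficient $a_{k_0,k_0+n}$; every other choice of $(i,j)$ repeats an index that already occurs in the $(n-1)$-fold product.

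The remaining point is the sign. After selecting the pair $dx_{k_0}\wedge dx_{k_0+n}$ from $\omega$, the resulting top form is a product of the $n$ two-forms $dx_i\wedge dx_{i+n}$ for $i=1,\dots,n$ in some order. Since each of these is a $2$-form, they commute under $\wedge$ (sign $(-1)^{2\cdot 2}=1$), so every reordering to the canonical order yields exactly $\eta$ with no sign change. Consequently the coefficient of $\eta$ in $\omega\wedge c_1(\Theta)^{\wedge(n-1)}$ equals
$$(-1)^{n-1}(n-1)!\sum_{k=1}^{n}a_{k,k+n}.$$

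Multiplying by the prefactor $(-1)^n$ from the coefficient-of-$\eta$ formula gives
$$\deg\omega=(-1)^n\cdot(-1)^{n-1}(n-1)!\sum_{k=1}^{n}a_{k,k+n}=-(n-1)!\bigl(a_{1,n+1}+a_{2,n+2}+\cdots+a_{n,2n}\bigr),$$
as claimed. There is no real obstacle here; the only thing that requires a moment of care is the sign bookkeeping, but this is trivialised by the observation that wedges of $2$-forms commute, so the sign is just $(-1)^n(-1)^{n-1}=-1$ regardless of $k_0$.
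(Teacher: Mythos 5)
Your computation is correct and is exactly the direct expansion the paper has in mind: the paper's own ``proof'' is just the remark that the identity ``is easily written out by hand,'' and you have written it out, correctly using the formula for $c_1(\Theta)^{\wedge(n-1)}$, the observation that only the terms $a_{k,k+n}\,dx_k\wedge dx_{k+n}$ survive against the summand omitting the pair $(k,k+n)$, and the sign conventions $(L_1\cdots L_n)=(-1)^n\cdot(\text{coefficient of }\eta)$. The sign bookkeeping via commutativity of $2$-forms is also right, so the proposal matches the intended argument in full.
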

\begin{proof}
This is easily written out by hand. 
\end{proof}

It is clear that the formulas for the $q_r$ for a general $n$ are complicated to write out by hand. For small $n$, however, we can do this. For example, for $n=3$, we can write the forms $q_r$ as follows:
\begin{eqnarray}
\nonumber q_2(\omega)&=&12a_{12}a_{45}+12a_{13}a_{46}+4a_{14}^2-4a_{14}a_{25}-4a_{14}a_{36}+12a_{15}a_{24}+\\
\nonumber && 12a_{16}a_{34}+12a_{23}a_{56}+4a_{25}^2-4a_{25}a_{36}+12a_{26}a_{35}+4a_{36}^2\\
\nonumber &&\\
\nonumber q_3(\omega)&=&36a_{12}a_{14}a_{45}+36a_{12}a_{25}a_{45}-108a_{12}a_{34}a_{56}+
108a_{12}a_{35}a_{46}-\\
\nonumber &&72a_{12}a_{36}a_{45}+36a_{13}a_{14}a_{46}+108a_{13}a_{24}a_{56}-72a_{13}a_{25}a_{46}+\\
\nonumber && 108a_{13}a_{26}a_{45}+36a_{13}a_{36}a_{46}+8a_{14}^3-12a_{14}^2a_{25}-\\
\nonumber && 12a_{14}^2a_{36}+36a_{14}a_{15}a_{24}+36a_{14}a_{16}a_{34}-72a_{14}a_{24}a_{56}-\\
\nonumber &&12a_{14}a_{25}^2+48a_{14}a_{25}a_{36}-72a_{14}a_{26}a_{35}-12a_{14}a_{36}^2+\\
\nonumber && 108a_{15}a_{23}a_{46}+36a_{15}a_{24}a_{25}-72a_{15}a_{24}a_{36}+108a_{15}a_{26}a_{34}-\\
\nonumber && 108a_{16}a_{23}a_{45}+108a_{16}a_{24}a_{35}-72a_{16}a_{25}a_{34}+36a_{16}a_{34}a_{36}+\\
\nonumber && 36a_{23}a_{25}a_{56}+36a_{23}a_{36}a_{56}+8a_{25}^3-12a_{25}^2a_{36}+36a_{25}a_{26}a_{35}-\\
\nonumber && 12a_{25}a_{36}^2+36a_{26}a_{35}a_{36}+8a_{36}^3.
\end{eqnarray}

Let us keep assuming that $n=3$. Since $H^2(A,\Z)\simeq\bigwedge^2H^1(A,\Z)\simeq\bigwedge^2\mbox{Hom}(\Lambda,\Z)\simeq\Z^{15}$ and
$$c_1(\Theta)=-(dx_1\wedge dx_4+dx_2\wedge dx_5+dx_3\wedge dx_6),$$
we can take the projection
$$\Z^{15}\to H^2(A,\Z)/\Z c_1(\Theta)\simeq\Z^{14}$$
that takes
$$(a_{ij})_{1\leq i<j\leq6}\mapsto(a_{ij}')_{1\leq i<j\leq6,(i,j)\neq (3,6)},$$
where $a'_{ij}=a_{ij}$ if $(i,j)\neq(1,4),(2,5)$, and $a_{14}'=a_{14}-a_{36}$ and $a_{25}'=a_{25}-a_{36}$.
From what we said above, we have that an integral form $\omega$ is in $\mbox{NS}(A)$ if and only if $\omega\wedge dz_1\wedge dz_2\wedge dz_3=0$. Using the change of coordinates from real coordinates to complex coordinates by the matrix $(\tau\hspace{0.1cm}I)$, we get the following proposition:

\begin{prop}\label{23}
Let $(A,\Theta)$ be a ppav of dimension 3, corresponding to a matrix $\tau=(\tau_{ij})\in\frak{H}_3$. Then $\mbox{NS}(A,\Theta)$ consists of all vectors $\beta=(b_1,\ldots,b_{14})\in\Z^{14}$ that satisfy the linear equations
\begin{eqnarray}
\nonumber0&=&b_6-\tau_{13}b_7-\tau_{23}b_{8}-\tau_{33}b_9+\tau_{12}b_{10}+\tau_{22}b_{11}+
\left|\begin{array}{cc}\tau_{12}&\tau_{13}\\\tau_{22}&\tau_{23}\end{array}\right|b_{12}+\\
\nonumber&&\left|\begin{array}{cc}\tau_{12}&\tau_{13}\\\tau_{23}&\tau_{33}\end{array}\right|b_{13}+
\left|\begin{array}{cc}\tau_{22}&\tau_{23}\\\tau_{23}&\tau_{33}\end{array}\right|b_{14}\\
\nonumber0&=&b_2-\tau_{13}b_3-\tau_{23}b_4-\tau_{33}b_5+\tau_{11}b_{10}+\tau_{12}b_{11}+
\left|\begin{array}{cc}\tau_{11}&\tau_{12}\\\tau_{13}&\tau_{23}\end{array}\right|b_{12}+\\
\nonumber&&\left|\begin{array}{cc}\tau_{11}&\tau_{13}\\\tau_{13}&\tau_{33}\end{array}\right|b_{13}+
\left|\begin{array}{cc}\tau_{12}&\tau_{13}\\\tau_{23}&\tau_{33}\end{array}\right|b_{14}\\
\nonumber0&=&b_1-\tau_{12}b_3-\tau_{22}b_4-\tau_{23}b_5+\tau_{11}b_7+\tau_{12}b_8+\tau_{13}b_9+
\left|\begin{array}{cc}\tau_{11}&\tau_{12}\\\tau_{12}&\tau_{22}\end{array}\right|b_{12}+\\
\nonumber&&\left|\begin{array}{cc}\tau_{11}&\tau_{12}\\\tau_{13}&\tau_{23}\end{array}\right|b_{13}+
\left|\begin{array}{cc}\tau_{12}&\tau_{13}\\\tau_{22}&\tau_{23}\end{array}\right|b_{14}\\
\nonumber0&=&\tau_{13}b_1-\tau_{12}b_2+
\left|\begin{array}{cc}\tau_{12}&\tau_{13}\\\tau_{22}&\tau_{23}\end{array}\right|b_4+
\left|\begin{array}{cc}\tau_{12}&\tau_{13}\\\tau_{23}&\tau_{33}\end{array}\right|b_5+\tau_{11}b_6-
\left|\begin{array}{cc}\tau_{11}&\tau_{12}\\\tau_{13}&\tau_{23}\end{array}\right|b_8-\\
\nonumber&&\left|\begin{array}{cc}\tau_{11}&\tau_{13}\\\tau_{13}&\tau_{33}\end{array}\right|b_9+
\left|\begin{array}{cc}\tau_{11}&\tau_{12}\\\tau_{12}&\tau_{22}\end{array}\right|b_{11}+(\det\tau)b_{14}\\
\nonumber0&=&-\tau_{23}b_1+\tau_{22}b_2+\left|\begin{array}{cc}\tau_{12}&\tau_{13}\\\tau_{22}&\tau_{23}\end{array}\right|b_3-
\left|\begin{array}{cc}\tau_{22}&\tau_{23}\\\tau_{23}&\tau_{33}\end{array}\right|b_{5}-\tau_{12}b_6-
\left|\begin{array}{cc}\tau_{11}&\tau_{12}\\\tau_{13}&\tau_{23}\end{array}\right|b_7+\\
\nonumber&&\left|\begin{array}{cc}\tau_{12}&\tau_{13}\\\tau_{23}&\tau_{33}\end{array}\right|b_9+
\left|\begin{array}{cc}\tau_{11}&\tau_{12}\\\tau_{12}&\tau_{22}\end{array}\right|b_{10}+(\det\tau)b_{13}\\
\nonumber0&=&\tau_{33}b_1-\tau_{23}b_2-\left|\begin{array}{cc}\tau_{12}&\tau_{13}\\\tau_{23}&\tau_{33}\end{array}\right|b_3
-\left|\begin{array}{cc}\tau_{22}&\tau_{23}\\\tau_{23}&\tau_{33}\end{array}\right|b_4+\tau_{13}b_6+
\left|\begin{array}{cc}\tau_{11}&\tau_{13}\\\tau_{13}&\tau_{33}\end{array}\right|b_7+\\
\nonumber&&\left|\begin{array}{cc}\tau_{12}&\tau_{13}\\\tau_{23}&\tau_{33}\end{array}\right|b_8-
\left|\begin{array}{cc}\tau_{11}&\tau_{12}\\\tau_{13}&\tau_{23}\end{array}\right|b_{10}-
\left|\begin{array}{cc}\tau_{12}&\tau_{13}\\\tau_{22}&\tau_{23}\end{array}\right|b_{11}+(\det\tau)b_{12}
\end{eqnarray}
\end{prop}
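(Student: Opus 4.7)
The plan is to derive the six equations directly from the complex-analytic criterion recalled just before the statement: an integral class $\omega \in H^2(A,\Z)$ lies in $\mbox{NS}(A)$ if and only if $\omega \wedge dz_1 \wedge dz_2 \wedge dz_3 = 0$. Writing $\omega = \sum_{i<j} a_{ij}\, dx_i \wedge dx_j$ and letting $\widehat{dx_k} := dx_1 \wedge \cdots \widehat{dx_k} \cdots \wedge dx_6$ for $k=1,\ldots,6$ denote the standard basis of $5$-forms, this becomes: the coefficient of each $\widehat{dx_k}$ in $\omega \wedge dz_1 \wedge dz_2 \wedge dz_3$ vanishes. We therefore obtain six linear equations, which we must match against those in the statement.

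First I would use $\Lambda = (\tau\ I)\Z^6$ to write the holomorphic differentials in the real basis: $z_i = \sum_{j=1}^3 \tau_{ij} x_j + x_{i+3}$ gives $dz_i = \sum_{j=1}^3 \tau_{ij}\, dx_j + dx_{i+3}$. Then $\Omega := dz_1 \wedge dz_2 \wedge dz_3 = \sum_{j_1<j_2<j_3} \kappa_{j_1 j_2 j_3}\, dx_{j_1}\wedge dx_{j_2}\wedge dx_{j_3}$, where $\kappa_{j_1 j_2 j_3}$ is the $3\times 3$ minor of the complex coefficient matrix on columns $j_1,j_2,j_3$. By inspection these minors take only the four shapes appearing in the statement: $\det\tau$ for $(j_1,j_2,j_3)=(1,2,3)$; a $2\times 2$ minor of $\tau$ when exactly two of the $j_\ell$ are in $\{1,2,3\}$; a single entry $\tau_{ij}$ when exactly one is; and $\pm 1$ for $(4,5,6)$.

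Next, for each $k$ I would extract the coefficient of $\widehat{dx_k}$ in $\omega\wedge\Omega$ as a sum of $\varepsilon\, a_{ij}\, \kappa_{j_1 j_2 j_3}$ over all partitions $\{i,j\}\sqcup\{j_1,j_2,j_3\}=\{1,\dots,6\}\setminus\{k\}$, where $\varepsilon=\pm 1$ is the signature needed to reorder $(i,j,j_1,j_2,j_3)$ into the unique increasing sequence. Setting these six coefficients to zero yields six linear equations in the $15$ variables $(a_{ij})_{i<j}$.

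The final step is to descend to $\mbox{NS}(A,\Theta)=\mbox{NS}(A)/\Z[\Theta]$ via the projection recalled just before the statement: substitute $a_{14}=b_3+a_{36}$ and $a_{25}=b_8+a_{36}$, rename the other $a_{ij}$'s as the corresponding $b_\ell$, and verify that every occurrence of $a_{36}$ cancels. This cancellation is automatic from $c_1(\Theta)\wedge\Omega=0$, since evaluating each equation at $\omega=c_1(\Theta)=-(dx_1\wedge dx_4+dx_2\wedge dx_5+dx_3\wedge dx_6)$ forces the coefficients of $a_{14}$, $a_{25}$, $a_{36}$ to sum to zero; consequently the $a_{36}$ terms drop out after the substitution and only the $b_3$, $b_8$ contributions remain. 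A term-by-term comparison then yields the six equations in the exact form displayed. The main obstacle is pure bookkeeping: tracking wedge-product signs and matching each $2\times 2$ minor of $\tau$ to the correct pair $(\{i,j\},\{j_1,j_2,j_3\})$. The work is tedious but wholly mechanical, with no conceptual difficulty.
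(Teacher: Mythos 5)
Your proposal is correct and is precisely the computation the paper intends: the paper gives no written proof beyond the remark that the proposition follows from the criterion $\omega\wedge dz_1\wedge dz_2\wedge dz_3=0$ together with the change of coordinates via $(\tau\ I)$, which is exactly the expansion you carry out. Your observation that the $a_{36}$-terms cancel because $c_1(\Theta)\wedge\Omega=0$ correctly justifies the descent to $\mbox{NS}(A,\Theta)$, so nothing is missing.
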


\vspace{0.5cm}

After rewriting the equations for $q_r$ in $\Z^{14}$ and using Theorem~\ref{16}, we get the following corollary:

\begin{coro}\label{24}
The ppav of dimension 3 corresponding to a matrix $\tau=(\tau_{ij})\in\frak{H}_3$ contains an elliptic curve whose abelian complement has degree $d$ if and only if there exists a non-zero vector $\beta=(b_1,\ldots,b_{14})\in\Z^{14}$ that satisfies the equations of Proposition~\ref{23} and such that
\begin{eqnarray}\nonumber d^2&=&12b_1b_{12}+12b_2b_{13}+4b_8^2-4b_3b_8+4b_3^2+12b_4b_7+12b_5b_{10}+12b_6b_{14}+12b_9b_{11}\\
\nonumber-d^3&=&108b_4b_9b_{10}+36b_1b_8b_{12}-72b_5b_8b_{10}+36b_3b_5b_{10}-108b_5b_6b_{12}+\\
\nonumber&&108b_1b_{11}b_{13}-72b_3b_7b_{14}+36b_9b_{11}b_8+108b_2b_7b_{14}-72b_2b_8b_{13}-\\
\nonumber&&72b_3b_9b_{11}+108b_4b_6b_{13}+36b_3b_4b_7+36b_4b_7b_8+36b_2b_3b_{13}+\\
\nonumber&&108b_5b_7b_{11}-108b_1b_{10}b_{14}-12b_3b_8^2+108b_2b_9b_{12}+36b_6b_8b_{14}+\\
\nonumber&&36b_1b_3b_{12}+8b_3^3+8b_8^3.
\end{eqnarray}

\end{coro}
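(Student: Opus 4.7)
The plan is to combine Theorem~\ref{16} (specialized to $n=3$) with Proposition~\ref{23} and the explicit formulas for $q_2$ and $q_3$ displayed just before Proposition~\ref{23}. In dimension three an abelian divisor $Z \subset A$ has a one-dimensional abelian complement, which is an elliptic subgroup $E$, and conversely every elliptic subgroup arises this way. Hence the existence of an elliptic curve on $A$ whose abelian complement has degree $d$ is equivalent to the existence of an abelian divisor of degree $d$, which by Theorem~\ref{16} translates into the existence of a primitive class $[\alpha] \in \mbox{NS}(A, \Theta)$ satisfying $q_2(\alpha) = d^2$ and $q_3(\alpha) = -d^3$.

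Proposition~\ref{23} identifies $\mbox{NS}(A,\Theta)$ with the subgroup of $\Z^{14}$ cut out by the six displayed linear equations in the entries of $\tau$. The indexing $b_1, \ldots, b_{14}$ is the lexicographic enumeration of the fourteen pairs $(i,j)$ with $1 \leq i < j \leq 6$ and $(i,j) \neq (3,6)$: explicitly, $b_1 = a_{12}$, $b_2 = a_{13}$, $b_3 = a_{14}$, $b_4 = a_{15}$, $b_5 = a_{16}$, $b_6 = a_{23}$, $b_7 = a_{24}$, $b_8 = a_{25}$, $b_9 = a_{26}$, $b_{10} = a_{34}$, $b_{11} = a_{35}$, $b_{12} = a_{45}$, $b_{13} = a_{46}$, $b_{14} = a_{56}$. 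Because $c_1(\Theta) = -(dx_1\wedge dx_4 + dx_2 \wedge dx_5 + dx_3 \wedge dx_6)$, a natural section of the quotient $\mbox{NS}(A) \to \mbox{NS}(A,\Theta)$ is obtained by normalizing $a_{36} = 0$, and this is exactly the section implicit in the reindexing above.

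The remaining step is to substitute this section into the $a_{ij}$-expressions for $q_2(\omega)$ and $q_3(\omega)$ printed earlier and simplify. Setting $a_{36} = 0$ kills every term containing the factor $a_{36}$, and relabeling the surviving $a_{ij}$ by the dictionary above yields, term by term, the polynomials on the right-hand sides of the displayed equations $d^2 = \cdots$ and $-d^3 = \cdots$ in the corollary. One can double-check the result for $q_2$ by matching the six bilinear pairs $a_{12}a_{45}$, $a_{13}a_{46}$, $a_{15}a_{24}$, $a_{16}a_{34}$, $a_{23}a_{56}$, $a_{26}a_{35}$ with the corresponding $b$-monomials and observing that the $a_{14}, a_{25}$ block reduces to $4(b_3^2 - b_3 b_8 + b_8^2)$.

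The main obstacle is purely bookkeeping: the expansion of $q_3$ contains many terms and one must verify that every monomial appearing after the substitution coincides, up to integer coefficient, with a monomial in the stated formula. There is no conceptual difficulty; no further ideas beyond Theorem~\ref{16}, Proposition~\ref{23}, and the explicit projection $\Z^{15} \to \Z^{14}$ are required.
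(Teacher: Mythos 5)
Your proposal is correct and is precisely the paper's (implicit) argument: the corollary is presented as following from Theorem~\ref{16} combined with Proposition~\ref{23} by rewriting $q_2$ and $q_3$ in the coordinates of $\Z^{14}$, which is exactly the substitution via the section $a_{36}=0$ (equivalently $a'_{14}=a_{14}-a_{36}$, $a'_{25}=a_{25}-a_{36}$) that you describe. The one point worth flagging is that Theorem~\ref{16} requires a \emph{primitive} class while the corollary as printed asks only for a non-zero vector; your reading, which keeps the primitivity hypothesis, is the one that actually matches Theorem~\ref{16} and Lemma~\ref{12} for the stated degree $d$.
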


In the equations above, there is nothing special about the particular case $n=3$, except for the fact that the equations are not extremely long. With higher dimensions, similar equations can be found using the same method. In fact, using \cite{vdG} chapter IX as inspiration, we could define the variety
$$\mathcal{A}_{g,d}:=\{(A,\Theta): A\mbox{ contains an elliptic curve of degree }d\};$$
these varieties seem to be the correct generalization of Humbert surfaces. It is not clear at first that the above set is a variety (as it could have infinitely many components), but work done by Debarre in \cite{Debarre} shows that it is actually irreducible, being a cover of $\mathbb{H}\times\frak{H}_{g-1}$. These varieties seem to be of interest in their own right.

We finish with an example that shows how this theory can be used to concretely find elliptic curves.

\begin{ex}
Gonz\'alez-Aguilera and Rodr\'{\i}guez \cite{GR} found, for every $n\geq 3$, a family of indecomposable principally polarized abelian varieties each of whose underlying abelian variety is isomorphic to the product of elliptic curves. More specifically, they give the family
$$\mathcal{F}_n:=\{\sigma\tau_0:\sigma\in\mathbb{H}\}\subseteq\frak{H}_n,$$
where
$$\tau_0=\left(\begin{array}{cccc}n&-1&\cdots&-1\\-1&n&\cdots&-1\\
\vdots&\vdots&\ddots&\vdots\\-1&-1&\cdots&n\end{array}\right).$$
If $(A_\sigma,\Theta_\sigma)$ denotes the ppav associated to $\sigma \tau_0$, then
$$A_\sigma\simeq E_{(n+1)\sigma}^{n-1}\times E_{\sigma},$$
where $E_\sigma:=\C/\langle1,\sigma\rangle$. For $n=3$, the equations of Proposition~\ref{23} become
\begin{eqnarray}
\nonumber 0&=&b_6+\sigma b_7+\sigma b_8-3\sigma b_9-\sigma b_{10}+3\sigma b_{11}+4\sigma^2 b_{12}-4\sigma^2 b_{13}+8\sigma^2 b_{14}\\
\nonumber 0&=&b_2+\sigma b_3+\sigma b_4-3\sigma b_5+3\sigma b_{10}-\sigma b_{11}-4\sigma^2 b_{12}+8\sigma^2 b_{13}-4\sigma^2 b_{14}\\
\nonumber 0&=&b_1+\sigma b_3-3\sigma b_4+\sigma b_5+3\sigma b_7-\sigma b_8-\sigma b_9+8\sigma^2 b_{12}-4\sigma^2 b_{13}+4\sigma^2 b_{14}\\
\nonumber 0&=&-\sigma b_1+\sigma b_2+4\sigma^2b_4-4\sigma^2 b_5+3\sigma b_6+4\sigma^2 b_8-8\sigma^2b_9+8\sigma^2b_{11}+16\sigma^3b_{14}\\
\nonumber 0&=&\sigma b_1+3\sigma b_2+4\sigma^2b_3-8\sigma^2 b_5+\sigma b_6+4\sigma^2b_7-4\sigma^2b_9+8\sigma^2b_{10}+16\sigma^3b_{13}\\
\nonumber0&=&3\sigma b_1+\sigma b_2+4\sigma^2b_3-8\sigma^2b_4-\sigma b_6+8\sigma^2 b_7-4\sigma^2b_8+4\sigma^2b_{10}-4\sigma^2b_{11}+16\sigma^3b_{12}
\end{eqnarray}

If $[\Q(\sigma):\Q]>2$, then if we have an integral solution to the equations above and $x_1,\ldots,x_{11}$ are the coefficients of the generators, we get the following relations:
\begin{eqnarray}
\nonumber0&=&-2x_1-x_6+x_{11}\\
\nonumber 0&=&x_1+x_6-2x_{11}\\
\nonumber0&=&-x_1-2x_6+x_{11}\\
\nonumber 0&=&-x_3+3x_4-x_5-3x_8+x_9+x_{10}\\
\nonumber 0&=&x_2+3x_3-x_4-x_5-3x_7\\
\nonumber 0&=&-3x_2+x_7-x_8-x_9+3x_{10}
\end{eqnarray}
This necessarily leads to $x_1=x_6=x_{11}=0$. The general integral solution to this system of equations is
$$\begin{pmatrix}x_2\\x_3\\x_4\\x_5\\x_7\\x_8\\x_9\\x_{10}\end{pmatrix}=\begin{pmatrix}a+c-2d+e\\
3a+b+c+d\\a+c+d\\e\\3a+b+c\\c\\b\\c-2d+e
\end{pmatrix}$$
for $a,b,c,d,e\in\Z$. Therefore, a vector in $\Z^{14}$ is a solution for the N\'eron-Severi equations if and only if it is of the form
$$\begin{pmatrix}0\\0\\e\\a+c+d\\3a+b+c+d\\0\\c\\b\\c-2d+e\\3a+b+c\\a+c-2d+e\\0\\0\\0\end{pmatrix}
=a\begin{pmatrix}0\\0\\0\\1\\3\\0\\0\\0\\0\\3\\1\\0\\0\\0\end{pmatrix}+
b\begin{pmatrix}0\\0\\0\\0\\1\\0\\0\\1\\0\\1\\0\\0\\0\\0\end{pmatrix}+
c\begin{pmatrix}0\\0\\0\\1\\1\\0\\1\\0\\1\\1\\1\\0\\0\\0\end{pmatrix}+
d\begin{pmatrix}0\\0\\0\\1\\1\\0\\0\\0\\-2\\0\\-2\\0\\0\\0\end{pmatrix}+
e\begin{pmatrix}0\\0\\1\\0\\0\\0\\0\\0\\1\\0\\1\\0\\0\\0\end{pmatrix}.$$
Using this basis, we will write an element of $\mbox{NS}(A_\sigma,\Theta_\sigma)$ as a 5-tuple $(a,b,c,d,e)$. The forms can then be written as
\begin{eqnarray}\nonumber q_2(a,b,c,d,e)&=&108a^2+16b^2+36c^2+48d^2+16e^2+72ab+96ac+12ad+\\
\nonumber &&12ae+24bc+12bd-4be-24cd+24ce-48de\\
\nonumber q_3(a,b,c,d,e)&=&576ead-504bcd+936cea+24e^2b+24eb^2-504bad-\\
\nonumber&&1080cad+576ced+360bae+360bce-144bca-64e^3-64b^3+\\
\nonumber&&216bc^2-72bd^2+216c^2e-144ce^2-288ed^2+288de^2-72ae^2+\\
\nonumber&&216c^3+648ea^2+648ca^2+864c^2a-216c^2d-432cd^2-\\
\nonumber&&648a^2d-648d^2a-432ab^2-144b^2c-72db^2-648ba^2\end{eqnarray}

Using a simple Java program, we can find many primitive elements $\alpha$ such that $q_2(\alpha)$ is a square $d^2$ and such that $q_3(\alpha)=-d^3$. For example, the following table shows all abelian divisors in $\mbox{NS}(A_\sigma,\Theta_\sigma)$ whose coordinates lie between $-3$ and $3$ and whose degree is less than or equal to $6$:\\

\begin{center}
\begin{tabular}{|l| c| c| c|}
\hline
Divisor class of $Z$ & $(Z\cdot\Theta_\sigma^2)$ & $(E\cdot\Theta_\sigma)$\\
\hline
(0,0,0,-1,-2)&4&2\\
(1,-1,-1,-1,-1)&4&2\\
(0,-1,1,0,-1)&4&2\\
(1,-2,-1,0,0)&4&2\\
(0,1,0,0,0)&4&2\\
(-1,1,1,0,0)&4&2\\
(0,0,0,0,1)&4&2\\
(0,0,0,1,1)&4&2\\
(-1,2,0,1,2)&4&2\\
(0,0,1,-1,-3)&6&3\\
(0,0,-1,0,0)&6&3\\
(1,-3,0,0,0)&6&3\\
(-1,3,0,1,3)&6&3\\
\hline
\end{tabular}
\end{center}

Although this example was studied in \cite{GR} by using automorphisms of the elements of $\mathcal{F}_n$, we emphasize that the above elliptic curves were found by only using the period matrix of each element and we did not rely on the existence of automorphisms.

\end{ex}

\bibliographystyle{alpha}

\end{document}